\def\De{\Delta}
\def\l{\lambda}
\def\ve{\varepsilon}
\def\qed{\hfill$\Box$\vspace{12pt}}
\long\def\delete#1{}
\newcommand{\bmat}[1]{\begin{bmatrix}#1\end{bmatrix}}
\newcommand{\pmat}[1]{\begin{pmatrix}#1\end{pmatrix}}
\newcommand{\be}{\begin{equation}}
\newcommand{\ee}{\end{equation}}
\newcommand{\ben}{\begin{equation*}}
\newcommand{\een}{\end{equation*}}
\newcommand{\bea}{\begin{eqnarray}}
\newcommand{\eea}{\end{eqnarray}}
\newcommand{\bean}{\begin{eqnarray*}}
\newcommand{\eean}{\end{eqnarray*}}
\newtheorem{thm}{Theorem}[section]
\newtheorem{cor}[thm]{Corollary}
\newtheorem{exam}[thm]{Example}
\newtheorem{defn}[thm]{Definition}
\newtheorem{rem}{Remark}
\numberwithin{equation}{section}
\title{Spectra of subdivision-vertex and subdivision-edge neighbourhood coronae}
\author{Xiaogang Liu\\
{\small Department of Mathematics and Statistics}\\[-0.8ex]
{\small The University of Melbourne}\\[-0.8ex]
{\small Parkville, VIC 3010, Australia}\\
\emph{{\small \tt xiaogliu.yzhang@gmail.com}}
\and\; Pengli Lu
\\
{\small School of Computer and Communication}\\[-0.8ex]
{\small Lanzhou University of Technology}\\[-0.8ex]
{\small Lanzhou, 730050, Gansu, P.R. China}\\
\emph{{\small \tt lupengli88@163.com}} }
\date{}
\begin{document}

\openup 0.5\jot
\maketitle

\begin{abstract}
Let $G=(V(G),E(G))$ be a graph with vertex set $V(G)$ and edge set $E(G)$. The subdivision graph $\mathcal{S}(G)$ of a graph $G$ is the graph obtained by inserting a new vertex into every edge of $G$. Let $G_1$ and $G_2$ be two vertex disjoint graphs. The \emph{subdivision-vertex neighbourhood corona} of $G_1$ and $G_2$, denoted by $G_1 \boxdot G_2$, is the graph obtained from $\mathcal{S}(G_1)$ and $|V(G_1)|$ copies of $G_2$, all vertex disjoint, and joining the neighbours of the $i$th vertex of $V(G_1)$ to every vertex in the $i$th copy of $G_2$. The \emph{subdivision-edge neighbourhood corona} of $G_1$ and $G_2$, denoted by $G_1  \boxminus   G_2$, is the graph obtained from $\mathcal{S}(G_1)$ and $|I(G_1)|$ copies of $G_2$, all vertex disjoint, and joining the neighbours of the $i$th vertex of $I(G_1)$ to every vertex in the $i$th copy of $G_2$, where $I(G_1)$ is the set of inserted vertices of $\mathcal{S}(G_1)$. In this paper we determine the adjacency spectra, the Laplacian spectra and the signless Laplacian spectra of $G_1\boxdot G_2$ (respectively, $G_1\boxminus G_2$) in terms of the corresponding spectra of $G_1$ and $G_2$. As applications, these results enable us to construct infinitely many pairs of cospectral graphs, and using the results on the Laplacian spectra of subdivision-vertex neighbourhood coronae, new families of expander graphs are constructed from known ones.

\bigskip

\noindent\textbf{Keywords:} Spectrum, Cospectral graphs, Subdivision-vertex neighbourhood corona, Subdivision-edge neighbourhood corona, Expander graphs

\bigskip

\noindent{{\bf AMS Subject Classification (2010):} 05C50}
\end{abstract}

\section{Introduction}

All graphs considered in this paper are undirected and simple. Let $G=(V(G),E(G))$ be a
graph with vertex set $V(G)=\{v_1,v_2,\ldots,v_n\}$ and edge set
$E(G)$. The \emph{adjacency matrix} of $G$, denoted by $A(G)$, is the $n \times n$ matrix whose $(i,j)$-entry is $1$ if $v_i$ and $v_j$ are adjacent in $G$ and $0$ otherwise. Denote by $d_i=d_G(v_i)$ the degree of $v_i$ in $G$, and define $D(G)$ to be the diagonal matrix with diagonal entries $d_1,d_2,\ldots,d_n$. The \emph{Laplacian matrix} of $G$ and the \emph{signless Laplacian matrix} of $G$ are defined as $L(G)=D(G)-A(G)$ and $Q(G)=D(G)+A(G)$, respectively.  Given an $n \times n$ matrix $M$, denote by
$$
\phi(M;x)=\det(xI_n-M),
$$
or simply $\phi(M)$, the characteristic polynomial of $M$, where $I_n$ is the identity matrix of size $n$. In particular, for a graph $G$, we call $\phi(A(G))$  (respectively, $\phi(L(G))$, $\phi(Q(G))$) the \emph{adjacency} (respectively, \emph{Laplacian}, \emph{signless Laplacian}) \emph{characteristic polynomial} of $G$, and its roots the \emph{adjacency} (respectively, \emph{Laplacian}, \emph{signless Laplacian}) \emph{eigenvalues} of $G$. Denote the eigenvalues of $A(G), L(G)$ and $Q(G)$, respectively, by $\lambda_1(G)\geq\lambda_2(G)\geq\cdots\geq\lambda_n(G)$, $0=\mu_1(G)\leq\mu_2(G)\leq\cdots\leq\mu_n(G)$, $\nu_1(G)\leq\nu_2(G)\leq\cdots\leq\nu_n(G)$.  The collection of eigenvalues of $A(G)$ together with their multiplicities are called the \emph{$A$-spectrum} of $G$. Two graphs are said to be \emph{$A$-cospectral} if they have the same $A$-spectrum. Similar terminology will be used for $L(G)$ and $Q(G)$. So we can speak of \emph{$L$-spectrum}, \emph{$Q$-spectrum},  \emph{$L$-cospectral graphs} and \emph{$Q$-cospectral graphs}. It is well known that graph spectra store a lot of structural information about a graph; see \cite{kn:Cvetkovic95,kn:Cvetkovic10,kn:Brouwer12} and the references therein.

Until now, many graph operations such as the disjoint union, the Cartesian product, the Kronecker product, the corona, the edge corona and the neighborhood corona have been introduced, and their spectra are computed in \cite{kn:Brouwer12,kn:Cui12,kn:Cvetkovic95,kn:Cvetkovic10,kn:Gopalapillai11,kn:Hou10,kn:McLeman11,kn:Wang12}, respectively. It is well known \cite{kn:Cvetkovic10} that the \emph{subdivision graph} $\mathcal{S}(G)$ of a graph $G$ is the graph obtained by inserting a new vertex into every edge of $G$. We denote the set of such new vertices by $I(G)$.  In \cite{kn:Indulal12}, two new graph operations based on subdivision graphs: \emph{subdivision-vertex join} and \emph{subdivision-edge join}, are introduced, and their $A$-spectra are investigated respectively. Further works on their $L$-spectra and $Q$-spectra are given in \cite{kn:Liu12}. In \cite{kn:Lu12},  the spectra of the so-called \emph{subdivision-vertex corona} and \emph{subdivision-edge corona} are computed, respectively.  Motivated by these works, we define two new graph operations based on subdivision graphs as follows.

\begin{defn}
\label{SVNCdf1}
{\em The \emph{subdivision-vertex neighbourhood corona} of $G_1$ and $G_2$, denoted by $G_1 \boxdot G_2$, is the graph obtained from $\mathcal{S}(G_1)$ and $|V(G_1)|$ copies of $G_2$, all vertex-disjoint, and joining the neighbours of the $i$th vertex of $V(G_1)$ to every vertex in the $i$th copy of $G_2$.}
\end{defn}

\begin{defn}
\label{SENCdf2}
{\em The \emph{subdivision-edge neighbourhood corona} of $G_1$ and $G_2$, denoted by $G_1  \boxminus   G_2$, is the graph obtained from $\mathcal{S}(G_1)$ and $|I(G_1)|$ copies of $G_2$, all vertex-disjoint, and joining the neighbours of the $i$th vertex of $I(G_1)$ to every vertex in the $i$th copy of $G_2$.}
\end{defn}

Note that if $G_1$ and $G_2$ are two graphs on disjoint sets of $n_1$ and $n_2$ vertices, $m_1$ and $m_2$ edges, respectively, then $G_1\boxdot G_2$ has $n_1+m_1+n_1n_2$ vertices, $2m_1+n_1m_2+2m_1n_2$ edges, and $G_1\boxminus G_2$ has $n_1+m_1+m_1n_2$ vertices, $2m_1+m_1m_2+2m_1n_2$ edges.

\begin{exam}
{\em Let $P_n$ denote a path of order $n$. Figure \ref{fff2} depicts the subdivision-vertex neighbourhood corona $P_4\boxdot P_2$ and subdivision-edge neighbourhood corona $P_4\boxminus P_2$, respectively.}
\end{exam}
\begin{figure}[here]
\centering
\vspace{-1.5cm}
\includegraphics*[height=7.8cm]{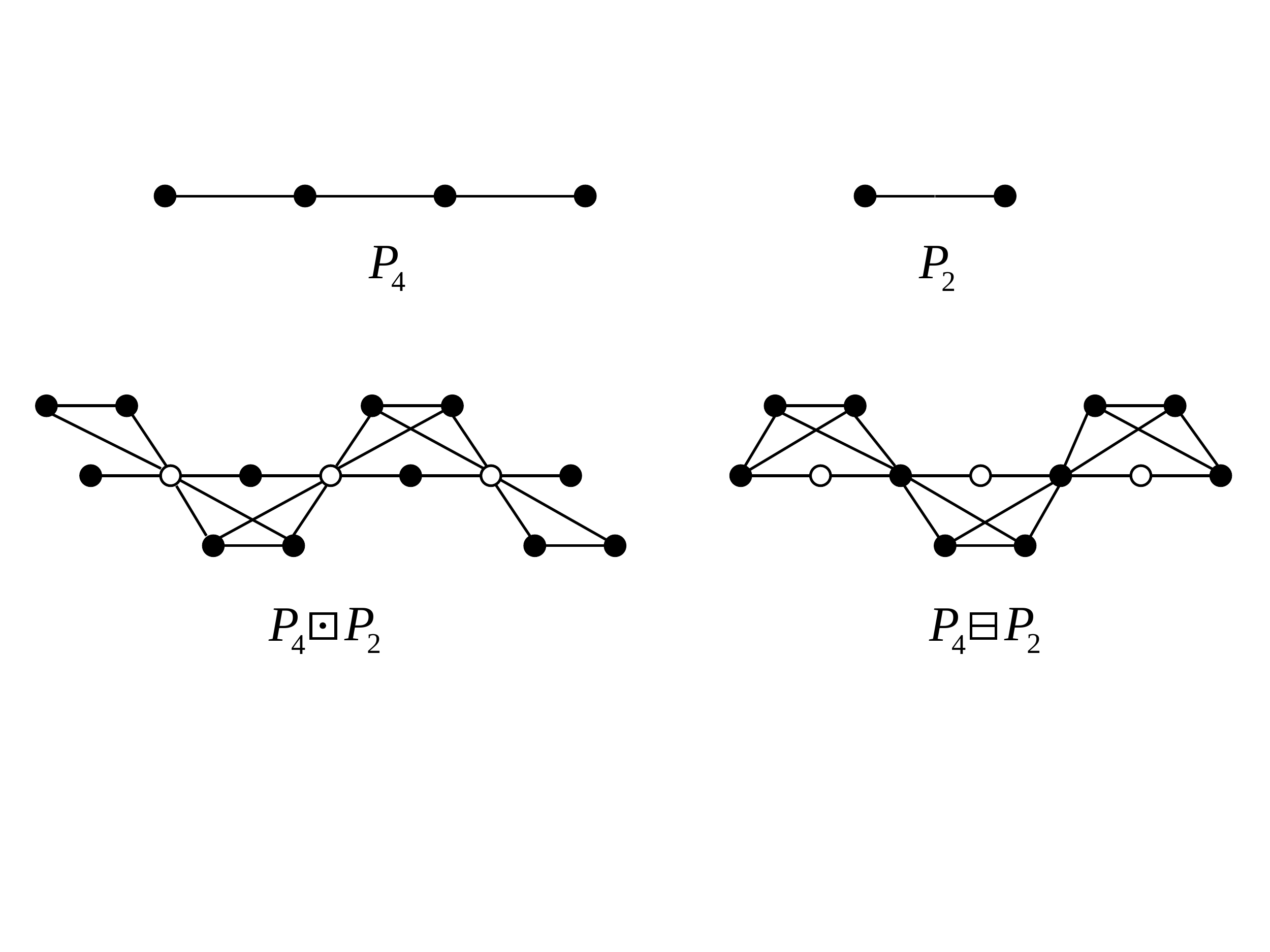}
\vspace{-2.6cm}
\caption{\small An example of subdivision-vertex and subdivision-edge neighbourhood coronae.}
\label{fff2}
\end{figure}
In this paper,  we will determine the $A$-spectra, the $L$-spectra and the $Q$-spectra of $G_1\boxdot G_2$ (respectively, $G_1\boxminus  G_2$) for a regular graph $G_1$ and an arbitrary graph $G_2$ in terms of that of $G_1$ and $G_2$ (see Theorems \ref{SVNCATh1}, \ref{SVNCLthm1}, \ref{SVNCQthm1}, \ref{SENCATh1}, \ref{SENCLTh1} and \ref{SENCQthm1}). As we will see in Corollaries \ref{SVNCAcosp}, \ref{SVNCLcosp}, \ref{SVNCQcosp}, \ref{SENCAcosp}, \ref{SENCLcosp} and \ref{SENCQcosp}, our results on the spectra of $G_1\boxdot  G_2$ and $G_1\boxminus  G_2$ enable us to construct infinitely many pairs of cospectral graphs. Moreover, as stated in Corollary \ref{SVNCExp}, by using the results on the $L$-spectra of subdivision-vertex neighbourhood coronae, we can construct new families of expander graphs from known ones.

\section{Spectra of subdivision-vertex neighbourhood coronae}\label{SVNC:spec}

In this section, we determine the spectra of subdivision-vertex neighbourhood coronae with the help of the \emph{coronal} of a matrix. The \emph{$M$-coronal} $\Gamma_M(x)$ of an $n\times n$ square matrix $M$ is defined \cite{kn:McLeman11,kn:Cui12} to be the sum of the entries of the matrix $(x I_n-M)^{-1}$, that is,
$$\Gamma_M(x)=\textbf{1}_n^T(x I_n-M)^{-1}\textbf{1}_n,$$
where $\textbf{1}_n$ denotes the column vector of size $n$ with all the entries equal one.

It is known \cite[Proposition 2]{kn:Cui12} that, if $M$ is an $n \times n$ matrix with each row sum equal to a constant $t$, then
\be
\label{eq:GammaT}
\Gamma_{M}(x) = \frac{n}{x-t}.
\ee
In particular, since for any graph $G$ with $n$ vertices, each row sum of $L(G)$ is equal to $0$, we have
\be
\label{eq:GammaTL}
\Gamma_{L(G)}(x) = \frac{n}{x}.
\ee

Let $M_1$, $M_2$, $M_3$ and $M_4$ be respectively $p\times p$, $p\times q$, $q\times p$ and $q\times q$ matrices with $M_1$ and $M_4$ invertible. It is well known that
\begin{eqnarray}
  \det\pmat{M_1 & M_2\\
            M_3 & M_4} &=& \det(M_4)\cdot\det\left(M_1-M_2M_4^{-1}M_3\right), \label{schur1}   \\
   &=& \det(M_1)\cdot\det\left(M_4-M_3M_1^{-1}M_2\right) \label{schur2},
\end{eqnarray}
where $M_1-M_2M_4^{-1}M_3$ and $M_4-M_3M_1^{-1}M_2$ are called the \emph{Schur complements} \cite{kn:Schur} of $M_4$ and $M_1$, respectively.

The \emph{Kronecker product} $A\otimes B$ of two matrices $A=(a_{ij})_{m \times n}$ and $B=(b_{ij})_{p \times q}$ is the $mp \times nq$ matrix obtained from $A$ by replacing each element $a_{ij}$ by $a_{ij}B$. This is an associative operation with the property that $(A\otimes B)^T=A^T\otimes B^T$ and $(A\otimes B)(C\otimes D)=AC\otimes BD$ whenever the products $AC$ and $BD$ exist. The latter implies $(A\otimes B)^{-1}=A^{-1}\otimes B^{-1}$ for nonsingular matrices $A$ and $B$. Moreover, if $A$ and $B$ are $n \times n$ and $p \times p$ matrices, then $\det(A\otimes B)=(\det A)^p (\det B)^n$. The reader is referred to \cite{kn:Kronecker} for other properties of the Kronecker product not mentioned here.

Let $G_1$ and $G_2$ be two graphs on disjoint sets of $n_1$ and $n_2$ vertices, $m_1$ and $m_2$ edges, respectively. We first label the vertices of $G_1\boxdot G_2$ as follows. Let $V(G_1)=\left\{v_1,v_2,\ldots,v_{n_1}\right\}$, $I(G_1)=\left\{e_1,e_2,\ldots,e_{m_1}\right\}$ and $V(G_2)=\left\{u_1,u_2,\ldots,u_{n_2}\right\}$. For $i = 1, 2, \ldots, n_1$, let $V^i(G_2)=\{u_1^i,u_2^i,\ldots,u_{n_2}^i\}$ denote the vertex set of the $i$th copy of $G_2$. Then
\be\label{SVNCpart}
V(G_1)\cup I(G_1) \cup \left[V^1(G_2)\cup V^2(G_2)\cup\cdots\cup V^{n_1}(G_2)\right]
\ee
is a partition of $V(G_1\boxdot G_2)$.
Clearly, the degrees of the vertices of $G_1\boxdot  G_2$ are:
\bean
d_{G_1\boxdot  G_2}(v_i) &=& d_{G_1}(v_i),\;\, i=1,2,\ldots,n_1,  \\
d_{G_1\boxdot  G_2}(e_i) &=& 2+2n_2,\;\, i=1,2,\ldots,m_1, \\
d_{G_1\boxdot  G_2}(u_j^i) &=&d_{G_2}(u_j)+d_{G_1}(v_i),\;\, i=1,2,\ldots,n_1,\, j=1,2,\ldots,n_2.
\eean

\subsection{$A$-spectra of subdivision-vertex neighbourhood coronae}
\label{sec:SVNCA}

First, we compute the $A$-spectra of  subdivision-vertex neighbourhood coronae. Before proceeding, we need to mention two basic definitions. The \emph{vertex-edge incidence matrix} $R(G)$ \cite{kn:Godsil01} of a graph $G$ is the $(0,1)$-matrix with rows and columns indexed by the vertices and edges of $G$, respectively, such that the $ve$-entry of $R(G)$ is equal to $1$ if and only if the vertex $v$ is in the edge $e$. The \emph{line graph} \cite{kn:Godsil01} of a graph $G$ is the graph $\mathcal {L}(G)$ with the edges of $G$ as its vertices, and where two edges of $G$ are adjacent in $\mathcal {L}(G)$ if and only if they are incident in $G$.  It is well known \cite{kn:Cvetkovic10} that
\begin{equation}\label{Inc-Line}
R(G)^TR(G)=A(\mathcal {L}(G))+2I_{m},
\end{equation}
where $m$ is the number of edges of $G$.

\begin{thm}\label{SVNCATh1}
Let $G_1$ be an $r_1$-regular graph with $n_1$ vertices and $m_1$ edges, and $G_2$ an arbitrary graph on $n_2$ vertices. Then
\begin{eqnarray*}
\phi\left(A(G_1\boxdot G_2);x\right)=x^{m_1-n_1}\cdot\big(\phi\left(A(G_2);x\right)\big)^{n_1}\cdot\prod_{i=1}^{n_1}\left(x^2-\left(1+x\cdot \Gamma_{A(G_2)}(x)\right) (\lambda_i(G_1)+r_1)\right). \end{eqnarray*}
\end{thm}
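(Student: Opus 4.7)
The plan is to order the vertices of $G_1\boxdot G_2$ according to the partition (\ref{SVNCpart}) and read off the $3\times 3$ block form
\[
A(G_1\boxdot G_2)=\begin{pmatrix} 0 & R(G_1) & 0 \\ R(G_1)^{T} & 0 & R(G_1)^{T}\otimes \mathbf{1}_{n_2}^{T} \\ 0 & R(G_1)\otimes \mathbf{1}_{n_2} & I_{n_1}\otimes A(G_2) \end{pmatrix}.
\]
The $V(G_1)$-vs-copies block is zero because no edge is added between $v_i$ itself and the $i$th copy of $G_2$, while the $I(G_1)$-vs-copies block is $R(G_1)^{T}\otimes \mathbf{1}_{n_2}^{T}$ because in $\mathcal{S}(G_1)$ the neighbours of $v_i$ are precisely the inserted vertices incident to $v_i$, each of which is in turn joined to every vertex of the $i$th copy.

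I would then form $xI-A(G_1\boxdot G_2)$ and apply the Schur complement formula (\ref{schur1}) with respect to the lower-right block $I_{n_1}\otimes(xI_{n_2}-A(G_2))$, whose determinant is $(\phi(A(G_2);x))^{n_1}$ and whose inverse is $I_{n_1}\otimes(xI_{n_2}-A(G_2))^{-1}$. The Kronecker mixed-product rule collapses the correction matrix to $\Gamma_{A(G_2)}(x)\cdot R(G_1)^{T}R(G_1)$ in the $I(G_1)$ diagonal slot, using $\mathbf{1}_{n_2}^{T}(xI_{n_2}-A(G_2))^{-1}\mathbf{1}_{n_2}=\Gamma_{A(G_2)}(x)$, while the other slots are unchanged.

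A second Schur complement, now peeling off the top-left block $xI_{n_1}$ via (\ref{schur2}), reduces the remaining determinant to
\[
x^{n_1}\cdot\det\!\left(xI_{m_1}-\left(\Gamma_{A(G_2)}(x)+\tfrac{1}{x}\right)R(G_1)^{T}R(G_1)\right).
\]
Regularity of $G_1$ now enters: $R(G_1)R(G_1)^{T}=A(G_1)+r_1I_{n_1}$, so the $m_1\times m_1$ matrix $R(G_1)^{T}R(G_1)$ has the same nonzero spectrum as $A(G_1)+r_1I_{n_1}$, namely $\lambda_i(G_1)+r_1$ for $i=1,\dots,n_1$, together with $0$ of multiplicity $m_1-n_1$. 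Expanding the determinant eigenvalue by eigenvalue, combining with the factors $(\phi(A(G_2);x))^{n_1}$ and $x^{n_1}$ from the two Schur steps, and absorbing one factor of $x$ into each of the $n_1$ nontrivial factors delivers exactly the claimed product.

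The main subtlety is bookkeeping: the computation tacitly assumes $x\neq 0$, that $xI_{n_2}-A(G_2)$ is invertible, and that $R(G_1)^{T}R(G_1)$ has the expected rank (which fails by one when $G_1$ is bipartite, since then $-r_1$ is an eigenvalue of $A(G_1)$ and a factor $\lambda_i+r_1=0$ merges with a ``true'' zero eigenvalue). In every case both sides of the proposed identity are polynomials in $x$ of the same degree that agree on a cofinite subset of $\mathbb{C}$, so the identity extends to all $x$ by polynomial continuation.
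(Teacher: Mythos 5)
Your proposal is correct and follows essentially the same route as the paper: the same block form of $A(G_1\boxdot G_2)$, a first Schur complement against $I_{n_1}\otimes(xI_{n_2}-A(G_2))$ producing the coronal term $\Gamma_{A(G_2)}(x)R^TR$, a second Schur complement against $xI_{n_1}$, and then the spectrum of $R^TR$. The only cosmetic difference is that you identify that spectrum via the nonzero-spectrum equivalence of $R^TR$ and $RR^T=A(G_1)+r_1I_{n_1}$, while the paper routes through $R^TR=A(\mathcal{L}(G_1))+2I_{m_1}$ and the known line-graph eigenvalues — the same fact in two guises — and your closing remark on extending the identity by polynomial continuation tidies up an invertibility point the paper leaves implicit.
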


\begin{proof}
Let $R$ be the vertex-edge incidence matrix of $G_1$. Then, with respect to the partition (\ref{SVNCpart}), the adjacency matrix of $G_1\boxdot G_2$ can be written as
\[A(G_1\boxdot G_2)=\bmat{
                          0_{n_1\times n_1} & R & 0_{n_1\times n_1}\otimes\textbf{1}_{n_2}^T \\[0.2cm]
                          R^T & 0_{m_1\times m_1} & R^T\otimes\textbf{1}_{n_2}^T\\[0.2cm]
                          \left(0_{n_1\times n_1}\otimes\textbf{1}_{n_2}^T\right)^T & \left(R^T\otimes\textbf{1}_{n_2}^T\right)^T &I_{n_1}\otimes A(G_2)
                       },\]
where  $0_{s\times t}$ denotes the $s\times t$ matrix with all entries equal to zero.
Thus the adjacency characteristic polynomial of $G_1\boxdot G_2$ is given by
\begin{eqnarray*}
\phi\left(A(G_1\boxdot G_2)\right)
&=& \det\bmat{
                          xI_{n_1} & -R &  0_{n_1\times n_1}\otimes\textbf{1}_{n_2}^T \\[0.2cm]
                          -R^T & xI_{m_1} & -R^T\otimes\textbf{1}_{n_2}^T\\[0.2cm]
                            0_{n_1\times n_1}\otimes\textbf{1}_{n_2}  & -R\otimes\textbf{1}_{n_2}  &I_{n_1}\otimes (xI_{n_2}-A(G_2))
                       }\\ [0.2cm]
&=& \det(I_{n_1}\otimes(xI_{n_2}-A(G_2)))\cdot\det(S)\\
&=&\big(\phi\left(A(G_2)\right)\big)^{n_1}\cdot\det(S),
\end{eqnarray*}
where
\begin{eqnarray*}
S&=&\pmat{
         xI_{n_1}  & -R \\[0.2cm]
         -R^T & xI_{m_1}-\Gamma_{A(G_2)}(x)R^TR
       }
\end{eqnarray*}
is the Schur complement of $I_{n_1}\otimes(xI_{n_2}-A(G_2))$ obtained by applying (\ref{schur1}). It is known that \cite[Theorem 2.4.1]{kn:Cvetkovic10} the adjacency eigenvalues of $\mathcal {L}(G_1)$ are $\lambda_i(G_1)+r_1-2$, for $i=1,2,\ldots,n_1$, and $-2$ repeated $m_1-n_1$ times.  Thus, by applying (\ref{Inc-Line}) and (\ref{schur2}), the result follows from
\begin{eqnarray*}
\det (S)&=&x^{n_1}\cdot\det\left(xI_{m_1}-\Gamma_{A(G_2)}(x)R^TR-\frac{1}{x}R^TR\right) \\
&=& x^{n_1}\cdot\prod_{i=1}^{m_1}\left(x-\left(\frac{1}{x}+\Gamma_{A(G_2)}(x)\right) \Big(2+\lambda_i(\mathcal {L}(G_1))\Big)\right) \\
&=& x^{m_1}\cdot\prod_{i=1}^{n_1}\left(x-\left(\frac{1}{x}+\Gamma_{A(G_2)}(x)\right) \Big(\lambda_i(G_1)+r_1\Big)\right).
\end{eqnarray*}\qed
\end{proof}

Theorem \ref{SVNCATh1} implies the following result.

\begin{cor}\label{SVNCAreg}
Let $G_1$ be an $r_1$-regular graph on $n_1$ vertices and $m_1$ edges, and $G_2$ an $r_2$-regular graph on $n_2$ vertices. Then the $A$-spectrum of $G_1\boxdot G_2$ consists of:
\begin{itemize}
  \item[\rm (a)] $0$, repeated $m_1-n_1$ times;
    \item[\rm (b)] $\lambda_i(G_2)$, repeated $n_1$ times, for each $i=2,3,\ldots,n_2$;
  \item[\rm (c)] three roots of the equation
\[x^3-r_2x^2-(1+n_2)(\lambda_j(G_1)+r_1)x+r_2(\lambda_j(G_1)+r_1)=0\]
for each $j=1, 2, \ldots, n_1$.
\end{itemize}
\end{cor}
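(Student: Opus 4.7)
The plan is to specialize Theorem \ref{SVNCATh1} using the additional regularity hypothesis on $G_2$. Two ingredients are needed. First, since $G_2$ is $r_2$-regular, every row sum of $A(G_2)$ equals $r_2$, so \eqref{eq:GammaT} immediately gives $\Gamma_{A(G_2)}(x) = n_2/(x-r_2)$. Second, $r_2 = \lambda_1(G_2)$ is an eigenvalue of $A(G_2)$ (with eigenvector $\mathbf{1}_{n_2}$), so the adjacency characteristic polynomial factors as $\phi(A(G_2);x) = (x-r_2)\prod_{i=2}^{n_2}(x-\lambda_i(G_2))$.

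Next I would substitute the expression for $\Gamma_{A(G_2)}(x)$ into each of the $n_1$ quadratic factors appearing in Theorem \ref{SVNCATh1}. A short manipulation, putting everything over the common denominator $x-r_2$, gives
\[
x^2 - \bigl(1 + x\,\Gamma_{A(G_2)}(x)\bigr)(\lambda_j(G_1) + r_1) = \frac{P_j(x)}{x - r_2},
\]
where $P_j(x) = x^3 - r_2 x^2 - (1+n_2)(\lambda_j(G_1) + r_1)x + r_2(\lambda_j(G_1) + r_1)$ is precisely the cubic appearing in item (c).

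Finally, the bookkeeping step: the $n_1$ denominators $(x-r_2)$ from the product cancel against the factor $(x-r_2)^{n_1}$ extracted from $\phi(A(G_2);x)^{n_1}$, leaving
\[
\phi(A(G_1 \boxdot G_2);x) = x^{m_1-n_1} \cdot \left[\prod_{i=2}^{n_2}(x-\lambda_i(G_2))\right]^{n_1} \cdot \prod_{j=1}^{n_1} P_j(x),
\]
from which items (a), (b), (c) can be read off by inspecting zeros and multiplicities. This is essentially a routine substitution-and-simplification argument; the only point that needs a moment of care is the symbolic cancellation of $(x-r_2)^{n_1}$, since I want the final equality to hold as an identity of polynomials (not merely of rational functions). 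Because both sides of the identity in Theorem \ref{SVNCATh1} are already polynomials in $x$, the cancellation is valid on the whole complex plane, and there is no genuine obstacle. The statement is a direct corollary.
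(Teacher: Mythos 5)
Your proposal is correct and follows essentially the same route as the paper: specialize $\Gamma_{A(G_2)}(x)=n_2/(x-r_2)$ via \eqref{eq:GammaT}, substitute into Theorem \ref{SVNCATh1}, and observe that the pole at $x=r_2=\lambda_1(G_2)$ cancels against one factor of $(x-r_2)$ from each copy of $\phi(A(G_2);x)$, turning each quadratic factor into the cubic $P_j(x)$. If anything, your explicit polynomial identity and cancellation bookkeeping are slightly more careful than the paper's own (rather terse) argument.
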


\begin{proof}
By Theorem \ref{SVNCATh1}, we obtain (a) readily. Since $G_2$ is $r_2$-regular with $n_2$ vertices, by (\ref{eq:GammaT}), we have
$$\Gamma_{A(G_2)}(x) = \frac{n_2}{x-r_2}.$$
The pole of $\Gamma_{A(G_2)}(x)$ is $x=r_2=\l_1(G_2)$. Thus, by Theorem \ref{SVNCATh1}, for each $i=2,3,\ldots,n_2$, $\lambda_i(G_2)$ is an eigenvalue of $G_1\boxdot G_2$ repeated $n_1$ times. The remaining $3n_1$ eigenvalues of $G_1\boxdot G_2$ are obtained by solving
\[x^2-\left(1+ x\cdot\frac{n_2}{x-r_2}\right) (\lambda_j(G_1)+r_1)=0\]
for each $j=1,2,\ldots,n_1$, and this yields the eigenvalues in (c).
\qed
\end{proof}

Denote by $K_{p,q}$ the complete bipartite graph with $p, q \ge 1$ vertices in the two parts of its bipartition. It is known \cite{kn:McLeman11} that the $A(K_{p,q})$-coronal of $K_{p,q}$ is given by
$$
\Gamma_{A(K_{p,q})}(x) = \frac{(p+q)x+2pq}{x^2-pq}.
$$
The $A$-spectrum of $K_{p,q}$ \cite{kn:Brouwer12,kn:Cvetkovic10} consists of $\pm\sqrt{pq}$ with multiplicity one, and $0$ with multiplicity $p+q-2$. Since $\pm\sqrt{pq}$ are the poles of $\Gamma_{A(K_{p,q})}(x)$, Theorem \ref{SVNCATh1} implies the following result immediately. Note that the case when $p = q$ is also covered by Corollary \ref{SVNCAreg}.

\begin{cor}\label{SVNCAComp}
Let $G$ be an $r$-regular graph on $n$ vertices and $m$ edges with $m\ge n$, and let $p, q \ge 1$ be integers. Then the $A$-spectrum of $G \boxdot K_{p,q}$ consists of:
\begin{itemize}
  \item[\rm (a)] $0$, repeated $m+(p+q-3)n$ times;
  \item[\rm (b)] four roots of the equation \[x^4-\big[pq+(1+p+q)(\lambda_j(G)+r)\big]x^2-2pq(\lambda_j(G)+r)x+pq(\lambda_j(G)+r)=0\]
  for each $j=1, 2, \ldots, n_1$.
\end{itemize}
\end{cor}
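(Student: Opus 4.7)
The plan is to specialize Theorem \ref{SVNCATh1} to the case $G_1 = G$ (so $n_1 = n$, $m_1 = m$, $r_1 = r$) and $G_2 = K_{p,q}$ (so $n_2 = p+q$), and then clean up the resulting expression by clearing the denominator inside the product. All the ingredients are already available: the paper quotes $\Gamma_{A(K_{p,q})}(x) = \big((p+q)x + 2pq\big)/(x^2 - pq)$ and recalls that $\phi(A(K_{p,q});x) = x^{p+q-2}(x^2 - pq)$.

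First I would substitute these into the formula of Theorem \ref{SVNCATh1} to get
\begin{equation*}
\phi\big(A(G \boxdot K_{p,q});x\big) = x^{m-n}\cdot x^{(p+q-2)n}(x^2-pq)^n\cdot \prod_{j=1}^{n}\left(x^2 - \Big(1 + x\cdot\frac{(p+q)x+2pq}{x^2-pq}\Big)\big(\lambda_j(G)+r\big)\right).
\end{equation*}
Next, for each factor in the product I would clear the denominator $x^2 - pq$. A direct computation gives
\begin{equation*}
(x^2-pq)\left(x^2 - \Big(1 + x\cdot\tfrac{(p+q)x+2pq}{x^2-pq}\Big)\big(\lambda_j(G)+r\big)\right) = P_j(x),
\end{equation*}
where $P_j(x)$ is precisely the quartic displayed in part (b), namely
\begin{equation*}
P_j(x) = x^4 - \big[pq+(1+p+q)(\lambda_j(G)+r)\big]x^2 - 2pq(\lambda_j(G)+r)x + pq(\lambda_j(G)+r).
\end{equation*}
Therefore the $n$ factors $(x^2 - pq)$ occurring in $(\phi(A(K_{p,q})))^{n}$ exactly cancel those introduced by clearing denominators in the product, leaving
\begin{equation*}
\phi\big(A(G \boxdot K_{p,q});x\big) = x^{m-n+(p+q-2)n}\cdot\prod_{j=1}^{n} P_j(x) = x^{m+(p+q-3)n}\cdot\prod_{j=1}^{n} P_j(x).
\end{equation*}
Reading off the roots yields exactly the eigenvalues listed in (a) and (b); the hypothesis $m\ge n$ is only used to ensure that the exponent $m+(p+q-3)n$ of $x$ is non-negative, so the multiplicity in (a) is well defined.

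The only real obstacle is bookkeeping with the $(x^2-pq)$ factors: one must check that the cancellation occurs with exactly the right multiplicity so that no spurious zeros or poles at $\pm\sqrt{pq}$ remain, and that the quartic $P_j(x)$ is reproduced exactly in the form stated. Both are routine polynomial manipulations, and nothing beyond Theorem \ref{SVNCATh1}, the quoted coronal of $K_{p,q}$, and the spectrum of $K_{p,q}$ is required.
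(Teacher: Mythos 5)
Your proposal is correct and follows exactly the route the paper takes: the paper deduces the corollary "immediately" from Theorem \ref{SVNCATh1} using the quoted coronal $\Gamma_{A(K_{p,q})}(x)$ and the fact that $\pm\sqrt{pq}$ (the poles of the coronal) carry multiplicity one in the spectrum of $K_{p,q}$, so the factors $(x^2-pq)^n$ cancel when denominators are cleared, yielding the quartic in (b). Your write-up merely supplies the routine algebra that the paper leaves implicit, including the correct exponent $m+(p+q-3)n$ and the role of the hypothesis $m\ge n$.
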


Until now, many infinite families of pairs of $A$-cospectral graphs are generated by using graph operations (for example, \cite{kn:Barik07,kn:Liu12,kn:McLeman11}). Now, as stated in the following corollary of Theorem \ref{SVNCATh1}, the subdivision-vertex neighborhood corona enables us to obtain infinitely many pairs of $A$-cospectral graphs.

\begin{cor}
\label{SVNCAcosp}
\begin{itemize}
  \item[\rm (a)]  If $G_1$ and $G_2$ are $A$-cospectral regular graphs, and $H$ is any graph, then $G_1\boxdot  H$ and $G_2\boxdot  H$ are $A$-cospectral.
  \item[\rm (b)]  If $G$ is a regular graph, and $H_1$ and $H_2$ are $A$-cospectral graphs with $\Gamma_{A(H_1)}(x)=\Gamma_{A(H_2)}(x)$, then $G\boxdot  H_1$ and $G\boxdot  H_2$ are $A$-cospectral.
\end{itemize}
\end{cor}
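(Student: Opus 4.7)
The plan is to read off each term of the formula
\[
\phi\left(A(G_1\boxdot G_2);x\right)=x^{m_1-n_1}\cdot\big(\phi\left(A(G_2);x\right)\big)^{n_1}\cdot\prod_{i=1}^{n_1}\left(x^2-\left(1+x\cdot \Gamma_{A(G_2)}(x)\right) (\lambda_i(G_1)+r_1)\right)
\]
given in Theorem \ref{SVNCATh1}, and verify that under the hypotheses of each part every ingredient is determined by data the two graphs share.

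For part (a), suppose $G_1$ and $G_2$ are $A$-cospectral and regular; write $H$ for the second graph, $n_2=|V(H)|$. Cospectrality forces $n_1:=|V(G_1)|=|V(G_2)|$ and $\lambda_i(G_1)=\lambda_i(G_2)$ for all $i$. Since both graphs are regular, the common regularity degree equals the largest eigenvalue $\lambda_1$, so $G_1$ and $G_2$ share the same value $r_1$, hence the same number of edges $m_1=n_1 r_1/2$. Substituting $G:=G_1$ and $G:=G_2$ into the formula of Theorem \ref{SVNCATh1} applied with second graph $H$, every factor $x^{m_1-n_1}$, $(\phi(A(H);x))^{n_1}$, and each quadratic factor involving $\lambda_i(\cdot)+r_1$ and $\Gamma_{A(H)}(x)$ is identical. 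Therefore $\phi(A(G_1\boxdot H);x)=\phi(A(G_2\boxdot H);x)$, which is exactly the claim.

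For part (b), let $G$ be $r_1$-regular on $n_1$ vertices and $m_1$ edges, and suppose $H_1,H_2$ are $A$-cospectral with $\Gamma_{A(H_1)}(x)=\Gamma_{A(H_2)}(x)$. The cospectrality gives $|V(H_1)|=|V(H_2)|=:n_2$ and $\phi(A(H_1);x)=\phi(A(H_2);x)$. Applying Theorem \ref{SVNCATh1} to $G\boxdot H_1$ and $G\boxdot H_2$, the leading factor $x^{m_1-n_1}$ depends only on $G$; the middle factor $(\phi(A(H_j);x))^{n_1}$ agrees for $j=1,2$ by cospectrality; and inside the product over $i$ the only dependence on $H_j$ is through $\Gamma_{A(H_j)}(x)$, which is equal for $j=1,2$ by assumption. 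Hence the full characteristic polynomials agree, which proves the $A$-cospectrality of $G\boxdot H_1$ and $G\boxdot H_2$.

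The argument is essentially bookkeeping in the formula of Theorem \ref{SVNCATh1}; the only conceptual point worth stressing is in part (a), namely that $A$-cospectrality together with the regularity hypothesis pins down not just $\lambda_i(G_1)=\lambda_i(G_2)$ but also the common degree $r_1$ and the edge count $m_1$, so that the exponent $m_1-n_1$ and the shift $\lambda_i+r_1$ are truly determined by the shared spectrum. Once this is observed there is no further obstacle.
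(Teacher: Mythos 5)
Your proposal is correct and follows exactly the route the paper intends: the paper states Corollary \ref{SVNCAcosp} as an immediate consequence of Theorem \ref{SVNCATh1} without writing out a proof, and your argument is precisely that routine verification, including the one point worth making explicit (that $A$-cospectrality plus regularity determines the common degree $r_1$ and hence $m_1$, so every factor in the formula is matched).
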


\subsection{$L$-spectra of subdivision-vertex neighbourhood coronae}
\label{sec:SVNCL}

\begin{thm}\label{SVNCLthm1}
Let $G_1$ be an $r_1$-regular graph on $n_1$ vertices and $m_1$ edges, and $G_2$ an arbitrary graph on $n_2$ vertices. Then
\begin{eqnarray*}
\phi\left(L(G_1\boxdot  G_2);x\right)&=&x(x-2-2n_2-r_1)(x-2-2n_2)^{m_1-n_1}\cdot\prod_{i=1}^{n_2}(x-r_1-\mu_i(G_2))^{n_1}\\
&&\cdot\prod_{i=2}^{n_1}\left(x^2-(2+2n_2+r_1)x+(1+n_2)\mu_i(G_1)\right).
\end{eqnarray*}
\end{thm}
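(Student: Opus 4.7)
The plan is to mirror the proof of Theorem \ref{SVNCATh1}, replacing the adjacency matrix by the Laplacian and exploiting that $\Gamma_{L(G_2)}(y)=n_2/y$ by \eqref{eq:GammaTL}. With respect to the partition \eqref{SVNCpart}, and using the degree list recorded right after it, I would write
\[
L(G_1 \boxdot G_2) = \bmat{r_1 I_{n_1} & -R & 0 \\[0.2cm] -R^T & (2+2n_2) I_{m_1} & -R^T \otimes \textbf{1}_{n_2}^T \\[0.2cm] 0 & -R \otimes \textbf{1}_{n_2} & I_{n_1} \otimes (L(G_2) + r_1 I_{n_2})},
\]
where $R$ is the vertex-edge incidence matrix of $G_1$; the inflation of the lower-right block by $r_1 I_{n_2}$ reflects that every vertex of the $i$-th copy of $G_2$ acquires exactly $r_1$ new neighbours in $\mathcal{S}(G_1)$.

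Next, in $xI-L(G_1\boxdot G_2)$ I would apply \eqref{schur1} with $M_4 = I_{n_1} \otimes ((x-r_1) I_{n_2} - L(G_2))$. This peels off the factor $\det(M_4)=\prod_{i=1}^{n_2}(x-r_1-\mu_i(G_2))^{n_1}$, and the Kronecker mixed-product rule together with \eqref{eq:GammaTL} evaluated at $x-r_1$ collapses the only non-trivial correction to the lower-right block, giving the Schur complement
\[
S = \pmat{(x-r_1) I_{n_1} & R \\[0.2cm] R^T & (x-2-2n_2) I_{m_1} - \dfrac{n_2}{x-r_1}\, R^T R}.
\]

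I would then apply \eqref{schur2} to $S$ with $M_1=(x-r_1) I_{n_1}$; the two rational terms combine into $\det\left((x-2-2n_2) I_{m_1} - \frac{n_2+1}{x-r_1} R^T R\right)$. Via \eqref{Inc-Line} and the line-graph spectrum invoked in the proof of Theorem \ref{SVNCATh1}, the eigenvalues of $R^T R$ are $\lambda_i(G_1)+r_1$ ($i=1,\ldots,n_1$) together with $0$ of multiplicity $m_1 - n_1$; because $G_1$ is $r_1$-regular, $\lambda_i(G_1)+r_1 = 2r_1 - \mu_i(G_1)$. The $0$-eigenvalues give the $(x-2-2n_2)^{m_1-n_1}$ factor; each nonzero eigenvalue $2r_1 - \mu_i(G_1)$ contributes the quadratic $(x-r_1)(x-2-2n_2) - (n_2+1)(2r_1 - \mu_i(G_1))$, which expands so that the $2r_1(n_2+1)$ constants cancel, leaving $x^2 - (2+2n_2+r_1)x + (1+n_2)\mu_i(G_1)$. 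Isolating $i=1$, where $\mu_1(G_1)=0$, produces the explicit $x(x-2-2n_2-r_1)$ factor, and reassembling yields the claimed formula.

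The main obstacle is the bookkeeping rather than anything conceptual: one must check that the Kronecker contractions in the Schur-complement step collapse through the coronal identity exactly once (so that $\Gamma_{L(G_2)}$ is evaluated at $x-r_1$ rather than $x$), correctly align the $(x-r_1)$ powers so the coefficient $(n_2+1)/(x-r_1)$ emerges cleanly in front of $R^T R$, and verify the small algebraic miracle that turns quadratics indexed by $\lambda_i(G_1)$ into the cleaner Laplacian-indexed quadratics in the statement via $\lambda_i(G_1)+r_1 = 2r_1 - \mu_i(G_1)$.
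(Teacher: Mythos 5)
Your proposal is correct and follows essentially the same route as the paper's own proof: the identical block form of $L(G_1\boxdot G_2)$, the Schur complement against $I_{n_1}\otimes((x-r_1)I_{n_2}-L(G_2))$ yielding the factor $\prod_{i=1}^{n_2}(x-r_1-\mu_i(G_2))^{n_1}$ and the coronal evaluated at $x-r_1$, then a second Schur complement reducing to $\det\big((x-2-2n_2)I_{m_1}-\tfrac{n_2+1}{x-r_1}R^TR\big)$ and the line-graph spectrum of $R^TR$ with the cancellation $\lambda_i(G_1)+r_1=2r_1-\mu_i(G_1)$. No substantive differences.
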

\begin{proof}
Let $R$ be the vertex-edge incidence matrix of $G_1$. Then, with respect to the partition (\ref{SVNCpart}), the Laplacian matrix of $G_1\boxdot G_2$ can be written as
\[L(G_1\boxdot G_2)=\bmat{
                          r_1I_{n_1} & -R & 0_{n_1\times n_1}\otimes\textbf{1}_{n_2}^T \\[0.2cm]
                          -R^T & (2+2n_2)I_{m_1} & -R^T\otimes\textbf{1}_{n_2}^T\\[0.2cm]
                          \left(0_{n_1\times n_1}\otimes\textbf{1}_{n_2}^T\right)^T & -\left(R^T\otimes\textbf{1}_{n_2}^T\right)^T &I_{n_1}\otimes(r_1I_{n_2}+L(G_2))}.\]
Thus the Laplacian characteristic polynomial of $G_1\boxdot G_2$ is given by
\begin{eqnarray*}
\phi\left(L(G_1\boxdot G_2)\right)
&=& \det\bmat{
                          (x-r_1)I_{n_1} &  R & 0_{n_1\times n_1}\otimes\textbf{1}_{n_2}^T \\[0.2cm]
                          R^T & (x-2-2n_2)I_{m_1} &  R^T\otimes\textbf{1}_{n_2}^T\\[0.2cm]
                          0_{n_1\times n_1}\otimes\textbf{1}_{n_2}  &  R\otimes\textbf{1}_{n_2} & I_{n_1}\otimes((x-r_1)I_{n_2}-L(G_2))}\\ [0.2cm]
&=& \det(I_{n_1}\otimes((x-r_1)I_{n_2}-L(G_2)))\cdot\det(S)\\
&=&\det(S)\cdot\prod_{i=1}^{n_2}(x-r_1-\mu_i(G_2))^{n_1},
\end{eqnarray*}
where
\begin{eqnarray*}
S&=&\pmat{
         (x-r_1)I_{n_1}  &  R \\[0.2cm]
          R^T & (x-2-2n_2)I_{m_1}-\Gamma_{L(G_2)}(x-r_1)R^TR
       }
\end{eqnarray*}
is the Schur complement of $I_{n_1}\otimes((x-r_1)I_{n_2}-L(G_2))$ obtained by applying (\ref{schur1}). Note that $\mu_i(G_1)=r_1-\lambda_i(G_1)$, $i=1,2,\ldots,n_1$. Thus, by (\ref{eq:GammaTL}) and (\ref{schur2}), the result follows from
\begin{eqnarray*}
\det (S)&=&(x-r_1)^{n_1}\cdot\det\left((x-2-2n_2)I_{m_1}-\Gamma_{L(G_2)}(x-r_1)R^TR-\frac{1}{x-r_1}R^TR\right) \\
&=& (x-r_1)^{n_1}\cdot\prod_{i=1}^{m_1}\left(x-2-2n_2-\left(\frac{1}{x-r_1}+\Gamma_{L(G_2)}(x-r_1)\right) \Big(2+\lambda_i(\mathcal {L}(G_1))\Big)\right) \\
&=& (x-2-2n_2)^{m_1-n_1}\prod_{i=1}^{n_1}\big[x^2-(2+2n_2+r_1)x+(1+n_2)\mu_i(G_1)\big].
\end{eqnarray*}
\qed\end{proof}

Theorem \ref{SVNCLthm1} implies the following result.

\begin{cor}
\label{SVNCLcosp}
\begin{itemize}
  \item[\rm (a)]  If $G_1$ and $G_2$ are $L$-cospectral regular graphs, and $H$ is an arbitrary graph, then $G_1\boxdot  H$ and $G_2\boxdot  H$ are $L$-cospectral.
  \item[\rm (b)]  If $G$ is a regular graph, and $H_1$ and $H_2$ are $L$-cospectral graphs, then $G\boxdot  H_1$ and $G\boxdot  H_2$ are $L$-cospectral.
  \item[\rm (c)]  If $G_1$ and $G_2$ are $L$-cospectral regular graphs, and $H_1$ and $H_2$ are $L$-cospectral graphs, then $G_1\boxdot  H_1$ and $G_2\boxdot  H_2$ are $L$-cospectral.
\end{itemize}
\end{cor}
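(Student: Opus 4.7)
The plan is to read Theorem \ref{SVNCLthm1} as an explicit formula that expresses $\phi(L(G_1 \boxdot G_2); x)$ in terms of a short list of numerical invariants: $n_1$, $m_1$, $r_1$, the Laplacian eigenvalues $\mu_i(G_1)$ of $G_1$, and $n_2$ together with the Laplacian eigenvalues $\mu_i(G_2)$ of $G_2$. Establishing each of the three conclusions reduces to checking that the stated hypotheses force every invariant appearing on the right-hand side to agree for the two graphs being compared.

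The only preliminary observation required is that two $L$-cospectral regular graphs share their regularity. Indeed, $L$-cospectrality immediately gives the common order $n_1$ (the number of Laplacian eigenvalues) and the common size $m_1 = \tfrac{1}{2}\sum_i \mu_i$ via $\mathrm{tr}\,L(G) = 2m_1$. If both graphs are regular, the common regularity is then $r_1 = 2m_1/n_1$, so $r_1$ is recovered from the Laplacian spectrum. This is the only mildly non-trivial step in the proof and I would state it explicitly at the outset.

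For part (a), the hypothesis therefore guarantees that the pair $(n_1, m_1, r_1)$ and the multiset $\{\mu_i(G_1)\}$ are the same for $G_1$ and $G_2$; meanwhile $H$ contributes identical values of $n_2$ and $\{\mu_i(H)\}$ on both sides. Substituting into the formula of Theorem \ref{SVNCLthm1} yields $\phi(L(G_1 \boxdot H);x) = \phi(L(G_2 \boxdot H);x)$. For part (b), $G$ fixes $n_1$, $m_1$, $r_1$, and $\{\mu_i(G)\}$, while the $L$-cospectrality of $H_1$ and $H_2$ matches $n_2$ and the multiset $\{\mu_i(H_1)\} = \{\mu_i(H_2)\}$ in the middle product $\prod_{i=1}^{n_2}(x - r_1 - \mu_i(H))^{n_1}$; again both formulas coincide. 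Part (c) follows at once by composition: $G_1 \boxdot H_1$ is $L$-cospectral with $G_2 \boxdot H_1$ by (a), which is in turn $L$-cospectral with $G_2 \boxdot H_2$ by (b).

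There is essentially no obstacle to overcome; once the recovery $r_1 = 2m_1/n_1$ is noted, the corollary is a direct reading of Theorem \ref{SVNCLthm1}, and the three parts are respectively a substitution in the first factor of invariants, a substitution in the second factor, and the transitive combination of the two.
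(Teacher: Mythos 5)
Your proof is correct and is exactly the argument the paper intends: the corollary is stated as an immediate consequence of Theorem \ref{SVNCLthm1}, whose right-hand side depends only on $n_1$, $m_1$, $r_1$, $n_2$ and the two Laplacian spectra. Your explicit remark that $L$-cospectral regular graphs share $n_1$, $m_1=\frac{1}{2}\mathrm{tr}\,L$ and hence $r_1=2m_1/n_1$ is the one step the paper leaves implicit, and it is worth stating.
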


Let $t(G)$ denote the number of spanning trees of $G$. It is well known \cite{kn:Cvetkovic95} that if $G$ is a connected graph on $n$ vertices with Laplacian spectrum $0=\mu_1(G)<\mu_{2}(G)\le\cdots\le\mu_n(G)$, then $$t(G)=\frac{\mu_{2}(G)\cdots\mu_n(G)}{n}.$$ By Theorem \ref{SVNCLthm1}, we can readily obtain the following result.
\begin{cor}
\label{SVNCSptree}
Let $G_1$ be an $r_1$-regular graph on $n_1$ vertices and $m_1$ edges, and $G_2$ an arbitrary graph on $n_2$ vertices. Then
\begin{eqnarray*}
t(G_1\boxdot  G_2)=\frac{(2+2n_2+r_1)\cdot(2+2n_2)^{m_1-n_1}\cdot\prod_{i=1}^{n_2}(r_1+\mu_i(G_2))^{n_1}\cdot\prod_{i=2}^{n_1}(1+n_2)\mu_i(G_1)}{n_1+m_1+n_1n_2}.
\end{eqnarray*}
\end{cor}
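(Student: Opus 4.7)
The plan is to apply the Matrix-Tree (Kirchhoff) theorem, which expresses $t(G)$ as $\frac{1}{|V(G)|}$ times the product of the nonzero Laplacian eigenvalues of $G$, and to read those eigenvalues directly off the factorization of $\phi(L(G_1\boxdot G_2);x)$ given in Theorem \ref{SVNCLthm1}. Since $|V(G_1\boxdot G_2)|=n_1+m_1+n_1 n_2$, it remains only to compute the product of the nonzero roots of
\[
x(x-2-2n_2-r_1)(x-2-2n_2)^{m_1-n_1}\prod_{i=1}^{n_2}(x-r_1-\mu_i(G_2))^{n_1}\prod_{i=2}^{n_1}\bigl(x^2-(2+2n_2+r_1)x+(1+n_2)\mu_i(G_1)\bigr).
\]

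First I would verify that $0$ appears as an eigenvalue with multiplicity exactly one, so that the Matrix-Tree theorem applies. Assuming $G_1$ is connected with $r_1\ge 1$ (the nondegenerate case), each of the five factors above contributes eigenvalues that are manifestly nonzero except for the leading factor $x$: the constant $2+2n_2+r_1$ and $2+2n_2$ are positive; the roots $r_1+\mu_i(G_2)$ are at least $r_1>0$; and for $i\geq 2$, the quadratic $x^2-(2+2n_2+r_1)x+(1+n_2)\mu_i(G_1)$ has constant term $(1+n_2)\mu_i(G_1)>0$ because $\mu_i(G_1)>0$ for $i\ge 2$ when $G_1$ is connected. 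Hence $x=0$ is a simple root, confirming connectivity of $G_1\boxdot G_2$ and the applicability of Kirchhoff's formula.

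Next I would multiply the nonzero roots factor by factor. The linear factors contribute $2+2n_2+r_1$ and $(2+2n_2)^{m_1-n_1}$ directly, and $\prod_{i=1}^{n_2}(r_1+\mu_i(G_2))^{n_1}$ from the fourth factor. For each $i\in\{2,\dots,n_1\}$, Vieta's formula gives the product of the two roots of the quadratic as the constant term $(1+n_2)\mu_i(G_1)$. Multiplying everything together yields
\[
\prod_{j=2}^{n_1+m_1+n_1n_2}\mu_j(G_1\boxdot G_2)=(2+2n_2+r_1)(2+2n_2)^{m_1-n_1}\prod_{i=1}^{n_2}(r_1+\mu_i(G_2))^{n_1}\prod_{i=2}^{n_1}(1+n_2)\mu_i(G_1),
\]
and dividing by $n_1+m_1+n_1n_2$ gives the claimed formula.

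There is no genuine obstacle here; the entire argument is bookkeeping once Theorem \ref{SVNCLthm1} is in hand. The only point requiring any care is the simplicity of the zero eigenvalue, which needs the mild hypotheses $r_1\ge 1$ and $G_1$ connected (implicitly in force whenever one speaks of the spanning-tree count), and the clean use of Vieta's formula on the quadratic factors to avoid solving them explicitly.
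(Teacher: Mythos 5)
Your proof is correct and follows exactly the route the paper intends: the corollary is stated as an immediate consequence of Theorem \ref{SVNCLthm1} via the formula $t(G)=\mu_2(G)\cdots\mu_n(G)/n$, and your bookkeeping (Vieta on the quadratic factors, the degree/vertex count $n_1+m_1+n_1n_2$) matches. Your explicit check that $0$ is a simple root under the implicit hypothesis that $G_1$ is connected with $r_1\ge 1$ is a detail the paper leaves unstated, and it is a worthwhile addition.
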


It is well known that $a(G) = \mu_2(G)$ is called the \emph{algebraic connectivity} \cite{kn:Fiedler73} of $G$, and $a(G)$ is greater than $0$ if and only if $G$ is a connected graph. Furthermore, $a(G)$ is bounded above by the vertex connectivity of $G$. For more properties on the algebraic connectivity, please refer to \cite{kn:Fiedler73,kn:Cvetkovic10}. An infinite family of graphs, $G_{1}, G_2, G_3, \ldots$, is called a family of \emph{$\ve$-expander graphs} \cite{HLW}, where $\ve > 0$ is a fixed constant, if (i) all these graphs are $k$-regular for a fixed integer $k \ge 3$; (ii) $a(G_i) \ge \ve$ for $i = 1, 2, 3, \ldots$; and (iii) $n_i = |V(G_i)| \rightarrow \infty$ as $i \rightarrow \infty$. (Here we use the algebraic connectivity to define expander families. This is equivalent to the usual definition by using the isoperimetric number $i(G)$, because $a(G)/2 \le i(G) \le \sqrt{a(G)(2\De - a(G))}$ \cite{Moh} (see also \cite[Theorem 7.5.15]{kn:Cvetkovic10}) for any graph $G \ne K_1, K_2, K_3$ with maximum degree $\De$.)

In the following, we will use the subdivision-vertex neighbourhood corona to construct new families of expander graphs from known ones.

Suppose that $G$ is an $r$-regular graph with $r$ even, and $H$ is an edgeless graph with $\frac{r}{2}-1$ vertices. Then $G\boxdot H$ is still an $r$-regular graph. Moreover, Theorem \ref{SVNCLthm1} implies that
$$a(G\boxdot H)=r-\sqrt{r^2-\frac{r}{2}a(G)},$$
since the function $$f(x):=r-\sqrt{r^2-rx/2}$$ strictly increases with $x$ in $[0,2r]$.

Denote the function iteration of $f(x)$ by $f^1(x)=f(x)$ and $f^{j+1}(x)=f(f^j(x))$ for $j\ge1$. Define $G^1_i(H)=G_i\boxdot H$ and $G^{j+1}_i(H)=G_i^j(H)\boxdot H$ for $j\ge1$. Then we have the following result.

\begin{cor}
\label{SVNCExp}
Suppose $G_1, G_2, G_3, \ldots$ is a family of (non-complete) $r$-regular $\ve$-expander graphs with $r$ even. Let $H$ be an edgeless graph with $r/2-1$ vertices. Then $G_1^j(H), G_2^j(H), G_3^j(H), \ldots$ is a family of $r$-regular $f^j(\ve)$-expander graphs.
\end{cor}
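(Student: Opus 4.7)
The plan is to verify, for fixed $j$, the three defining properties of an $f^j(\varepsilon)$-expander family for the sequence $G_1^j(H), G_2^j(H), \ldots$: a common regularity degree, algebraic connectivity at least $f^j(\varepsilon)$, and $|V(G_i^j(H))| \to \infty$ as $i \to \infty$.

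Regularity and the vertex count are immediate from the data tabulated in Section~\ref{SVNC:spec}. Using the degree formulas listed just after the partition (\ref{SVNCpart}) with $G_1 = G$ taken to be $r$-regular and $G_2 = H$ edgeless on $n_2 = r/2 - 1$ vertices, the three types of vertices of $G \boxdot H$ have degrees $r_1 = r$, $\;2 + 2n_2 = 2 + 2(r/2 - 1) = r$, and $0 + r = r$ respectively, so the construction closes on itself and induction on $j$ preserves $r$-regularity. Similarly, $|V(G \boxdot H)| = n_1 + m_1 + n_1 n_2 = r\,n_1$, so $|V(G_i^j(H))| = r^j |V(G_i)| \to \infty$ as $i \to \infty$.

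For the algebraic connectivity, which is the substantive step, I would substitute $r_1 = r$, $n_2 = r/2 - 1$, and $\mu_i(H) \equiv 0$ into Theorem \ref{SVNCLthm1}. The spectrum of $L(G \boxdot H)$ then consists of $0$, the single value $2r$, the value $r$ with high multiplicity (from the factors $(x - r)^{m_1 - n_1}$ and $(x - r - \mu_i(H))^{n_1}$), and, for each $i = 2, \ldots, n_1$, the two roots $r \pm \sqrt{r^2 - (r/2)\mu_i(G)}$ of the quadratic $x^2 - 2rx + (r/2)\mu_i(G)$. Since any $r$-regular graph satisfies $\mu_i \le 2r$ the radicand is non-negative, and since $\mu_i(G) > 0$ for $i \ge 2$ (as $G$ is connected) each minus root lies in $(0, r]$. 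The smallest such minus root comes from $i = 2$ and equals $r - \sqrt{r^2 - (r/2)\,a(G)} = f(a(G))$, which I identify as $a(G \boxdot H)$; this simultaneously shows $0$ has multiplicity exactly $1$, so $G \boxdot H$ is connected. Combining with the monotonicity of $f$ on $[0, 2r]$, stated in the text and verified by a direct derivative, induction on $j$ gives $a(G_i^j(H)) = f^j(a(G_i)) \ge f^j(\varepsilon)$, since $f$, and therefore $f^j$, is strictly increasing and preserves positivity.

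The main obstacle I foresee is the bookkeeping in the eigenvalue identification: one must check that every positive eigenvalue produced by Theorem \ref{SVNCLthm1} is at least $f(a(G))$, which reduces to monotonicity of the map $\mu \mapsto r - \sqrt{r^2 - r\mu/2}$ in the $i \ge 2$ roots together with the elementary bound $f(a(G)) \le r \le 2r$, and that the multiplicity of $0$ is exactly $1$, so that connectivity --- and hence strict positivity of the algebraic connectivity --- persists at every stage of the iteration and serves as a valid input for the next.
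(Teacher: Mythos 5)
Your proposal is correct and follows essentially the same route as the paper: the paper's (very terse) justification is exactly the substitution $r_1=r$, $n_2=r/2-1$, $\mu_i(H)=0$ into Theorem \ref{SVNCLthm1} to get $a(G\boxdot H)=r-\sqrt{r^2-\tfrac{r}{2}a(G)}=f(a(G))$, combined with the degree count showing $G\boxdot H$ is again $r$-regular and the monotonicity of $f$ on $[0,2r]$ to iterate. Your write-up merely makes explicit the bookkeeping (multiplicity of $0$, the minus-roots lying in $(0,r]$, $|V(G\boxdot H)|=r\,n_1$) that the paper leaves implicit.
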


\begin{rem}
\emph{It is known \cite[Theorem 7.4.4]{kn:Cvetkovic10} that, if $v_i$ and $v_j$ are two non-adjacent vertices of a graph $G$, then
$$
a(G)\le\frac{1}{2}\left(d_G(v_i)+d_G(v_j)\right).
$$
Thus, we have $0<\varepsilon\le a(G_i)\le r$, which implies that $f(\ve)<\ve$. Therefore, $f^j(\ve)$ decreases as $j$ increases, since $f(x)$ strictly increases with $x$ in $[0,2r]$.}
\end{rem}
\subsection{$Q$-spectra of subdivision-vertex neighbourhood coronae}
\label{sec:SVNCQ}

\begin{thm}\label{SVNCQthm1}
Let $G_1$ be an $r_1$-regular graph on $n_1$ vertices and $m_1$ edges, and $G_2$ an arbitrary graph on $n_2$ vertices. Then
\begin{eqnarray*}
\phi\left(Q(G_1\boxdot  G_2);x\right)&=&(x-2-2n_2)^{m_1-n_1}\cdot\prod_{i=1}^{n_2}(x-r_1-\nu_i(G_2))^{n_1}\\
&& \cdot\prod_{i=1}^{n_1}\Big((x-2-2n_2)(x-r_1)-\big(1+(x-r_1)\Gamma_{Q(G_2)}(x-r_1)\big)\nu_i(G_1)\Big).
\end{eqnarray*}
\end{thm}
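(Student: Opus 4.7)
The plan is to mirror the proofs of Theorems \ref{SVNCATh1} and \ref{SVNCLthm1}, since the signless Laplacian shares the off-diagonal structure of the adjacency matrix while having diagonal contributions analogous to those used in the Laplacian case. First I would write $Q(G_1\boxdot G_2)$ as a $3\times 3$ block matrix with respect to the partition \eqref{SVNCpart}. Using the degree formulas listed just before Section \ref{sec:SVNCA}, together with $R$ the vertex–edge incidence matrix of $G_1$, this reads
\[
Q(G_1\boxdot G_2)=\bmat{r_1 I_{n_1} & R & 0_{n_1\times n_1}\otimes \mathbf 1_{n_2}^T \\[0.2cm] R^T & (2+2n_2)I_{m_1} & R^T\otimes \mathbf 1_{n_2}^T \\[0.2cm] 0_{n_1\times n_1}\otimes \mathbf 1_{n_2} & R\otimes \mathbf 1_{n_2} & I_{n_1}\otimes\bigl(r_1 I_{n_2}+Q(G_2)\bigr)},
\]
where the $(3,3)$-block comes from $D(G_2)+A(G_2)+r_1 I_{n_2}=Q(G_2)+r_1 I_{n_2}$ (since each $u_j^i$ gains $r_1$ in degree from its $r_1$ neighbours among $I(G_1)$).

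Next I would form $xI-Q(G_1\boxdot G_2)$ and apply \eqref{schur1} with respect to the $(3,3)$-block, whose inverse is $I_{n_1}\otimes\bigl((x-r_1)I_{n_2}-Q(G_2)\bigr)^{-1}$ and whose determinant contributes the factor $\prod_{i=1}^{n_2}(x-r_1-\nu_i(G_2))^{n_1}$. The cross-term computation
\[
(-R^T\otimes \mathbf 1_{n_2}^T)\bigl(I_{n_1}\otimes\bigl((x-r_1)I_{n_2}-Q(G_2)\bigr)^{-1}\bigr)(-R\otimes \mathbf 1_{n_2})=\Gamma_{Q(G_2)}(x-r_1)\,R^T R,
\]
by the mixed-product property of the Kronecker product and the definition of the coronal, yields the Schur complement
\[
S=\pmat{(x-r_1)I_{n_1} & -R \\[0.2cm] -R^T & (x-2-2n_2)I_{m_1}-\Gamma_{Q(G_2)}(x-r_1)\,R^T R}.
\]

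Then I would evaluate $\det(S)$ by a second Schur complement step using \eqref{schur2} with respect to the upper-left block, which gives
\[
\det(S)=(x-r_1)^{n_1}\cdot\det\!\Bigl((x-2-2n_2)I_{m_1}-\bigl(\Gamma_{Q(G_2)}(x-r_1)+\tfrac{1}{x-r_1}\bigr)R^T R\Bigr).
\]
Invoking \eqref{Inc-Line} and the known line-graph spectrum of $G_1$, the eigenvalues of $R^T R$ are $\lambda_i(G_1)+r_1=\nu_i(G_1)$ for $i=1,\ldots,n_1$ together with $0$ repeated $m_1-n_1$ times, so this determinant factorises as
\[
(x-2-2n_2)^{m_1-n_1}\prod_{i=1}^{n_1}\!\Bigl((x-2-2n_2)-\bigl(\tfrac{1}{x-r_1}+\Gamma_{Q(G_2)}(x-r_1)\bigr)\nu_i(G_1)\Bigr).
\]
Multiplying through by $(x-r_1)^{n_1}$ and distributing this factor across the product absorbs the $1/(x-r_1)$ and produces the claimed expression.

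The main obstacle, such as it is, is notational rather than conceptual: unlike the Laplacian case where $\Gamma_{L(G_2)}(x)=n_2/x$ simplifies everything to a clean quadratic, here $\Gamma_{Q(G_2)}$ does not collapse in general, so the final factor must be left in its implicit form involving the $Q(G_2)$-coronal, and one must be careful in the last step when combining the $(x-r_1)^{n_1}$ with the $1/(x-r_1)$ inside the product so that the resulting polynomial identity reads $(x-r_1)(x-2-2n_2)-\bigl(1+(x-r_1)\Gamma_{Q(G_2)}(x-r_1)\bigr)\nu_i(G_1)$ as stated.
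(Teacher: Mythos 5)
Your proposal is correct and follows exactly the route the paper intends: the paper writes down the same block form of $Q(G_1\boxdot G_2)$ and then omits the details as "similar to Theorem \ref{SVNCLthm1}," and your two-step Schur-complement computation (first eliminating the $I_{n_1}\otimes\bigl((x-r_1)I_{n_2}-Q(G_2)\bigr)$ block to produce the coronal term $\Gamma_{Q(G_2)}(x-r_1)R^TR$, then eliminating $(x-r_1)I_{n_1}$ and using the spectrum of $R^TR=A(\mathcal{L}(G_1))+2I_{m_1}$ with $\nu_i(G_1)=\lambda_i(G_1)+r_1$) is precisely that omitted argument, carried out correctly.
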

\begin{proof}
Let $R$ be the vertex-edge incidence matrix of $G_1$. Then the signless Laplacian matrix of $G_1\boxdot  G_2$ can be written as
\[Q(G_1\boxdot  G_2)=\bmat{
                          r_1I_{n_1} &  R & 0_{n_1\times n_1}\otimes\textbf{1}_{n_2}^T \\[0.2cm]
                           R^T & (2+2n_2)I_{m_1} &  R^T\otimes\textbf{1}_{n_2}^T\\[0.2cm]
                          \left(0_{n_1\times n_1}\otimes\textbf{1}_{n_2}^T\right)^T &  \left(R^T\otimes\textbf{1}_{n_2}^T\right)^T &I_{n_1}\otimes(r_1I_{n_2}+Q(G_2))}.\]
The rest of the proof is similar to that of Theorem \ref{SVNCLthm1} and hence we omit details. \qed\end{proof}

Again, by applying (\ref{eq:GammaT}), Theorem \ref{SVNCQthm1} implies the following result.
\begin{cor}\label{SVNCQcor1}
Let $G_1$ be an $r_1$-regular graph on $n_1$ vertices and $m_1$ edges, and $G_2$ an $r_2$-regular graph on $n_2$ vertices. Then
\begin{eqnarray*}
\phi\left(Q(G_1\boxdot G_2);x\right)&=&(x-2-2n_2)^{m_1-n_1}\cdot\prod_{i=1}^{n_2-1}(x-r_1-\nu_i(G_2))^{n_1}\cdot\prod_{i=1}^{n_1}(a-b\nu_i(G_1)),
\end{eqnarray*}
where $a=(x-2-2n_2)(x-r_1)(x-r_1-2r_2)$ and $b=(1+n_2)x-r_1-2r_2-r_1n_2$.
\end{cor}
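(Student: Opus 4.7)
The plan is to specialize Theorem \ref{SVNCQthm1} to the $r_2$-regular case by evaluating the $Q(G_2)$-coronal explicitly and then clearing denominators so that one factor from the second product cancels with the denominator that arises from clearing. No new matrix identity is needed; the argument is purely a rational-function manipulation.

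First I would invoke the fact that since $G_2$ is $r_2$-regular, the signless Laplacian $Q(G_2)$ has every row sum equal to $2r_2$. Applying (\ref{eq:GammaT}) with $t=2r_2$ gives
\[
\Gamma_{Q(G_2)}(x) \;=\; \frac{n_2}{x-2r_2}, \qquad \Gamma_{Q(G_2)}(x-r_1) \;=\; \frac{n_2}{x-r_1-2r_2}.
\]
Substituting this into the factor $1+(x-r_1)\Gamma_{Q(G_2)}(x-r_1)$ appearing in Theorem \ref{SVNCQthm1}, I combine over the common denominator $x-r_1-2r_2$ to obtain
\[
1+(x-r_1)\Gamma_{Q(G_2)}(x-r_1) \;=\; \frac{(1+n_2)(x-r_1)-2r_2}{x-r_1-2r_2} \;=\; \frac{b}{x-r_1-2r_2},
\]
with $b=(1+n_2)x-r_1-2r_2-r_1n_2$ as in the statement.

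Next I would rewrite the $i$-th factor of the last product in Theorem \ref{SVNCQthm1} as
\[
(x-2-2n_2)(x-r_1)-\frac{b\,\nu_i(G_1)}{x-r_1-2r_2} \;=\; \frac{a-b\,\nu_i(G_1)}{x-r_1-2r_2},
\]
where $a=(x-2-2n_2)(x-r_1)(x-r_1-2r_2)$. Taking the product over $i=1,\dots,n_1$ contributes an overall denominator $(x-r_1-2r_2)^{n_1}$. To finish, I use that for any $r_2$-regular graph the largest signless Laplacian eigenvalue equals $2r_2$, so $\nu_{n_2}(G_2)=2r_2$ under the ordering $\nu_1\le\cdots\le\nu_{n_2}$. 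Hence the middle product in Theorem \ref{SVNCQthm1} contains exactly the factor $(x-r_1-2r_2)^{n_1}$, which cancels the denominator and reduces the remaining product to $\prod_{i=1}^{n_2-1}(x-r_1-\nu_i(G_2))^{n_1}$.

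The only place to be careful is the bookkeeping in the cancellation step: one must check that $2r_2$ really does occur as $\nu_{n_2}(G_2)$ (with multiplicity at least one, which holds without any connectivity assumption because $Q(G_2)\mathbf{1}_{n_2}=2r_2\mathbf{1}_{n_2}$), and that the algebra produces exactly the $b$ displayed in the corollary. Beyond these verifications, the result follows immediately by assembling the factors $(x-2-2n_2)^{m_1-n_1}$, $\prod_{i=1}^{n_2-1}(x-r_1-\nu_i(G_2))^{n_1}$, and $\prod_{i=1}^{n_1}(a-b\,\nu_i(G_1))$.
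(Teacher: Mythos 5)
Your computation is correct and follows exactly the route the paper intends: the corollary is stated as an immediate consequence of Theorem \ref{SVNCQthm1} via (\ref{eq:GammaT}) with $t=2r_2$, and you have simply filled in the routine algebra, including the correct identification of $b$ and the cancellation of $(x-r_1-2r_2)^{n_1}$ against the factor coming from $\nu_{n_2}(G_2)=2r_2$. Nothing is missing.
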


Theorem \ref{SVNCQthm1} can enable us to construct infinitely many pairs of $Q$-cospectral graphs.

\begin{cor}
\label{SVNCQcosp}
\begin{itemize}
  \item[\rm (a)]  If $G_1$ and $G_2$ are $Q$-cospectral regular graphs, and $H$ is any graph, then $G_1\boxdot H$ and $G_2\boxdot  H$ are $Q$-cospectral.
  \item[\rm (b)]  If $G$ is a regular graph, and $H_1$ and $H_2$ are $Q$-cospectral graphs with $\Gamma_{Q(H_1)}(x)=\Gamma_{Q(H_2)}(x)$, then $G\boxdot  H_1$ and $G\boxdot  H_2$ are $Q$-cospectral.
\end{itemize}
\end{cor}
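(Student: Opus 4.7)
The plan is to read off both statements directly from Theorem~\ref{SVNCQthm1}, by observing that the right-hand side of the formula for $\phi\left(Q(G_1\boxdot G_2);x\right)$ depends on the two input graphs only through the following data: the parameters $n_1, m_1, r_1$ attached to $G_1$, the multiset $\{\nu_i(G_1)\}_{i=1}^{n_1}$, the number $n_2$ of vertices of $G_2$, the multiset $\{\nu_i(G_2)\}_{i=1}^{n_2}$, and the coronal $\Gamma_{Q(G_2)}(x)$. Consequently, any hypothesis that makes these invariants coincide for the two pairs under consideration will force equality of the polynomials and therefore $Q$-cospectrality.

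For part~(a), suppose $G_1$ and $G_2$ are $Q$-cospectral and regular, and let $H$ be arbitrary. The key preliminary remark is that the largest signless Laplacian eigenvalue of an $r$-regular graph equals $2r$, so $Q$-cospectrality forces $G_1$ and $G_2$ to share the same regularity $r_1$; they also share the same number of vertices $n_1$ (the size of the $Q$-spectrum) and hence the same number of edges $m_1=n_1r_1/2$. Applying Theorem~\ref{SVNCQthm1} with $H$ as the second factor, every parameter and every factor in the product on the right-hand side agrees for $G_1\boxdot H$ and $G_2\boxdot H$, so $\phi\left(Q(G_1\boxdot H);x\right)=\phi\left(Q(G_2\boxdot H);x\right)$.

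For part~(b), suppose $G$ is regular and $H_1, H_2$ are $Q$-cospectral with $\Gamma_{Q(H_1)}(x)=\Gamma_{Q(H_2)}(x)$. Then $H_1$ and $H_2$ have the same vertex count $n_2$ and the same multiset of signless Laplacian eigenvalues, and by hypothesis the same coronal. The factors $(x-2-2n_2)^{m_1-n_1}$ and $\prod_{i=1}^{n_2}(x-r_1-\nu_i(H_k))^{n_1}$ in Theorem~\ref{SVNCQthm1} therefore coincide for $k=1,2$, and the remaining factor $\prod_{i=1}^{n_1}\bigl((x-2-2n_2)(x-r_1)-(1+(x-r_1)\Gamma_{Q(H_k)}(x-r_1))\nu_i(G)\bigr)$ also coincides because only the coronal $\Gamma_{Q(H_k)}$ depends on $k$. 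Hence $\phi\left(Q(G\boxdot H_1);x\right)=\phi\left(Q(G\boxdot H_2);x\right)$.

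There is no real obstacle here beyond the preliminary observation in part~(a) that $Q$-cospectrality of regular graphs implies equal degree, which in turn guarantees the parameters $n_1, m_1, r_1$ that feed into Theorem~\ref{SVNCQthm1} agree. Once that is in hand, both parts are immediate substitutions into the characteristic-polynomial formula, exactly in parallel with Corollary~\ref{SVNCAcosp} and Corollary~\ref{SVNCLcosp}.
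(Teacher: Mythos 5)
Your proof is correct and takes essentially the same route as the paper, which states the corollary as an immediate consequence of Theorem~\ref{SVNCQthm1} without further argument. Your preliminary observation in part~(a) --- that $Q$-cospectrality of regular graphs forces equal degree (via the largest eigenvalue $2r$), hence equal $n_1$, $m_1$ and $r_1$ --- is a worthwhile detail that the paper leaves implicit.
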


\section{Spectra of subdivision-edge neighbourhood coronae}\label{SENC:spec}

In this section, we determine the spectra of subdivision-edge neighbourhood coronae. Let $G_1$ and $G_2$ be two graphs on disjoint sets of $n_1$ and $n_2$ vertices, $m_1$ and $m_2$ edges, respectively. First, we label the vertices of $G_1\boxminus  G_2$ as follows. Let $V(G_1)=\left\{v_1,v_2,\ldots,v_{n_1}\right\}$, $I(G_1)=\left\{e_1,e_2,\ldots,e_{m_1}\right\}$ and $V(G_2)=\left\{u_1,u_2,\ldots,u_{n_2}\right\}$. For $i = 1, 2, \ldots, m_1$, let $V^i(G_2)=\{u_1^i,u_2^i,\ldots,u_{n_2}^i\}$ denote the vertex set of the $i$th copy of $G_2$. Then
\be\label{SENCpart}
V(G_1)\cup I(G_1) \cup \left[V^1(G_2)\cup V^2(G_2)\cup\cdots\cup V^{m_1}(G_2)\right]
\ee
is a partition of $V(G_1\boxminus  G_2)$.
Clearly, the degrees of the vertices of $G_1\boxminus   G_2$ are:
\bean
d_{G_1\boxminus   G_2}(v_i) &=& d_{G_1}(v_i)(1+n_2),\;\, i=1,2,\ldots,n_1, \\
d_{G_1\boxminus   G_2}(e_i) &=& 2,\;\, i=1,2,\ldots,m_1, \\
d_{G_1\boxminus   G_2}(u_j^i) &=&d_{G_2}(u_j)+2,\;\, i=1,2,\ldots,n_1,\, j=1,2,\ldots,n_2.
\eean

\subsection{$A$-spectra of subdivision-edge neighbourhood coronae}
\label{sec:SENCA}

\begin{thm}\label{SENCATh1}
Let $G_1$ be an $r_1$-regular graph with $n_1$ vertices and $m_1$ edges, and $G_2$ an arbitrary graph on $n_2$ vertices. Then
\begin{eqnarray*}
\phi\left(A(G_1\boxminus  G_2);x\right)=x^{m_1-n_1}\cdot\big(\phi\left(A(G_2);x\right)\big)^{m_1}\cdot\prod_{i=1}^{n_1}\left(x^2-\left(1+x\cdot \Gamma_{A(G_2)}(x)\right) (\lambda_i(G_1)+r_1)\right). \end{eqnarray*}
\end{thm}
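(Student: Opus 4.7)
The plan is to follow the block-matrix/Schur-complement strategy used for Theorem \ref{SVNCATh1}, adapting it to the different incidence structure of $G_1\boxminus G_2$. Let $R$ be the vertex-edge incidence matrix of $G_1$. The key observation is that in the subdivision-edge case it is the inserted vertices that carry the $m_1$ copies of $G_2$, while the joined vertices are the endpoints of the corresponding edge in $V(G_1)$. Hence, with respect to the partition (\ref{SENCpart}), the $V(G_1)\times V^i(G_2)$ block is $R\otimes\textbf{1}_{n_2}^T$ (vertex $v_j$ is joined to every vertex of the $i$th copy iff $v_j$ is an endpoint of $e_i$), while the $I(G_1)\times V^i(G_2)$ block vanishes, giving
\[
A(G_1\boxminus G_2)=\bmat{
0_{n_1\times n_1} & R & R\otimes\textbf{1}_{n_2}^T\\
R^T & 0_{m_1\times m_1} & 0_{m_1\times m_1}\otimes\textbf{1}_{n_2}^T\\
(R\otimes\textbf{1}_{n_2}^T)^T & (0_{m_1\times m_1}\otimes\textbf{1}_{n_2}^T)^T & I_{m_1}\otimes A(G_2)
}.
\]

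I would then apply (\ref{schur1}) to $xI-A(G_1\boxminus G_2)$ using the lower-right block $I_{m_1}\otimes(xI_{n_2}-A(G_2))$ as the invertible pivot. Since there are now $m_1$ copies of $G_2$ rather than $n_1$, this immediately peels off a factor $(\phi(A(G_2);x))^{m_1}$. The resulting Schur complement reduces to
\[
S=\pmat{
xI_{n_1}-\Gamma_{A(G_2)}(x)\,RR^T & -R\\
-R^T & xI_{m_1}
},
\]
where the correction $\Gamma_{A(G_2)}(x)\,RR^T$ in the upper-left corner comes from the block product $(R\otimes\textbf{1}_{n_2}^T)(I_{m_1}\otimes(xI_{n_2}-A(G_2))^{-1})(R^T\otimes\textbf{1}_{n_2})$; all other off-diagonal corrections vanish because the $I(G_1)\times V^i(G_2)$ block is zero.

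Finally I would apply (\ref{schur1}) once more with $xI_{m_1}$ as the pivot to obtain
\[
\det(S)=x^{m_1}\,\det\Bigl(xI_{n_1}-\bigl(\tfrac{1}{x}+\Gamma_{A(G_2)}(x)\bigr)RR^T\Bigr).
\]
At this point, in place of the line-graph identity (\ref{Inc-Line}) used for $G_1\boxdot G_2$, I would use the simpler fact that $RR^T=r_1I_{n_1}+A(G_1)$ for an $r_1$-regular graph, so the eigenvalues of $RR^T$ are $\lambda_i(G_1)+r_1$. Factoring $x^{-1}$ out of each of the $n_1$ resulting linear factors converts $x^{m_1}$ into the advertised leading $x^{m_1-n_1}$ and produces the product $\prod_{i=1}^{n_1}\bigl(x^2-(1+x\Gamma_{A(G_2)}(x))(\lambda_i(G_1)+r_1)\bigr)$.

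There is no real obstacle here: the proof is essentially parallel to that of Theorem \ref{SVNCATh1}, and the main conceptual point to track is the swap from $R^TR$ in the $\boxdot$ case to $RR^T$ in the $\boxminus$ case, reflecting the interchange of roles between $V(G_1)$ and $I(G_1)$. Consequently the appeal to the spectrum of $\mathcal{L}(G_1)$ is replaced by the direct identity $RR^T=r_1I_{n_1}+A(G_1)$, and the final formula matches that of Theorem \ref{SVNCATh1} exactly except for the exponent on $\phi(A(G_2);x)$, which reflects only the difference in the number of copies of $G_2$.
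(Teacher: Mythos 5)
Your proposal is correct and follows essentially the same route as the paper's own proof: the same block decomposition of $A(G_1\boxminus G_2)$, the same two applications of the Schur complement formula, and the same replacement of the line-graph identity by $RR^T=r_1I_{n_1}+A(G_1)$. No gaps.
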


\begin{proof}
Let $R$ be the vertex-edge incidence matrix of $G_1$. Then, with respect to the partition (\ref{SENCpart}), the adjacency matrix of $G_1\boxminus  G_2$ can be written as
\[A(G_1\boxminus  G_2)=\bmat{
                          0_{n_1\times n_1} & R & R\otimes\textbf{1}_{n_2}^T \\[0.2cm]
                          R^T & 0_{m_1\times m_1} & 0_{m_1\times m_1}\otimes\textbf{1}_{n_2}^T\\[0.2cm]
                          \left(R\otimes\textbf{1}_{n_2}^T\right)^T & \left(0_{m_1\times m_1}\otimes\textbf{1}_{n_2}^T\right)^T &I_{m_1}\otimes A(G_2)
                       }.\]
Thus the adjacency characteristic polynomial of $G_1\boxminus  G_2$ is given by
\begin{eqnarray*}
\phi\left(A(G_1\boxminus  G_2)\right)
&=& \det\bmat{
                          xI_{n_1} & -R &  -R\otimes\textbf{1}_{n_2}^T \\[0.2cm]
                          -R^T & xI_{m_1} & 0_{m_1\times m_1}\otimes\textbf{1}_{n_2}^T\\[0.2cm]
                            -R^T\otimes\textbf{1}_{n_2}  &  0_{m_1\times m_1}\otimes\textbf{1}_{n_2} &I_{m_1}\otimes (xI_{n_2}-A(G_2))
                       }\\ [0.2cm]
&=& \det(I_{m_1}\otimes(xI_{n_2}-A(G_2)))\cdot\det(S)\\
&=&\big(\phi\left(A(G_2)\right)\big)^{m_1}\cdot\det(S),
\end{eqnarray*}
where
\begin{eqnarray*}
S&=&\pmat{
         xI_{n_1}-\Gamma_{A(G_2)}(x)RR^T  & -R \\[0.2cm]
         -R^T & xI_{m_1}
       }
\end{eqnarray*}
is the Schur complement of $I_{m_1}\otimes(xI_{n_2}-A(G_2))$ obtained by applying (\ref{schur1}). It is well known \cite{kn:Cvetkovic10} that $RR^T=A(G_1)+r_1I_{n_1}$. Thus, by (\ref{schur1}) again, the result follows from
\begin{eqnarray*}
\det (S)&=&x^{m_1}\cdot\det\left(xI_{n_1}-\Gamma_{A(G_2)}(x)RR^T-\frac{1}{x}RR^T\right) \\
&=& x^{m_1}\cdot\prod_{i=1}^{n_1}\left(x-\left(\frac{1}{x}+\Gamma_{A(G_2)}(x)\right) \Big(\lambda_i(G_1)+r_1\Big)\right).
\end{eqnarray*}\qed
\end{proof}

Similar to Corollaries \ref{SVNCAreg}, \ref{SVNCAComp} and \ref{SVNCAcosp}, Theorem \ref{SENCATh1} implies the following results.

\begin{cor}\label{SENCAreg}
Let $G_1$ be an $r_1$-regular graph on $n_1$ vertices and $m_1$ edges, and $G_2$ an $r_2$-regular graph on $n_2$ vertices. Then the $A$-spectrum of $G_1\boxminus  G_2$ consists of:
\begin{itemize}
  \item[\rm (a)] $0$, repeated $m_1-n_1$ times;
  \item[\rm (b)] $r_2$, repeated $m_1-n_1$ times;
    \item[\rm (c)] $\lambda_i(G_2)$, repeated $m_1$ times, for each $i=2,3,\ldots,n_2$;
  \item[\rm (d)] three roots of the equation
\[x^3-r_2x^2-(1+n_2)(\lambda_j(G_1)+r_1)x+r_2(\lambda_j(G_1)+r_1)=0\]
for each $j=1, 2, \ldots, n_1$.
\end{itemize}
\end{cor}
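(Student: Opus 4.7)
The plan is to start from Theorem \ref{SENCATh1} and evaluate each of its three factors explicitly under the additional hypothesis that $G_2$ is $r_2$-regular. The leading factor $x^{m_1-n_1}$ immediately produces the eigenvalue $0$ with multiplicity $m_1-n_1$, giving item (a). The middle factor $(\phi(A(G_2);x))^{m_1}$ contributes each adjacency eigenvalue of $G_2$ with multiplicity $m_1$; in particular, the Perron eigenvalue $\lambda_1(G_2)=r_2$ appears with multiplicity $m_1$, and the eigenvalues $\lambda_i(G_2)$, $i=2,\dots,n_2$, each appear with multiplicity $m_1$, which already yields the $\lambda_i(G_2)$ part of item (c).

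Next, since $G_2$ is $r_2$-regular, formula (\ref{eq:GammaT}) yields $\Gamma_{A(G_2)}(x)=n_2/(x-r_2)$. Substituting this into the remaining product in Theorem \ref{SENCATh1}, each quadratic factor becomes
\[
x^2-\Big(1+\tfrac{xn_2}{x-r_2}\Big)(\lambda_j(G_1)+r_1)=\frac{P_j(x)}{x-r_2},
\]
where $P_j(x)=x^3-r_2x^2-(1+n_2)(\lambda_j(G_1)+r_1)x+r_2(\lambda_j(G_1)+r_1)$ is exactly the cubic in item (d). Therefore the product over $j=1,\dots,n_1$ introduces a denominator $(x-r_2)^{n_1}$, which cancels $n_1$ copies of the factor $(x-r_2)$ already present in $(\phi(A(G_2);x))^{m_1}$. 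After this cancellation, $r_2$ remains as an eigenvalue with multiplicity $m_1-n_1$, which accounts for item (b), while the three roots of each $P_j(x)$ contribute the $3n_1$ eigenvalues in item (d).

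As a sanity check I would verify that the total multiplicity equals $(m_1-n_1)+(m_1-n_1)+(n_2-1)m_1+3n_1=n_1+m_1+m_1n_2$, which agrees with $|V(G_1\boxminus G_2)|$, so no eigenvalues are missed. The only real subtlety is the bookkeeping of the $(x-r_2)$ factors; the argument is otherwise parallel to the proof of Corollary \ref{SVNCAreg}, the single difference being that the exponent on $\phi(A(G_2);x)$ is $m_1$ rather than $n_1$, and this is precisely what produces the additional $r_2$ eigenvalues in part (b) that have no counterpart in Corollary \ref{SVNCAreg}.
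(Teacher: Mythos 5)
Your proposal is correct and follows essentially the same route as the paper, which proves this corollary only by reference to the argument for Corollary \ref{SVNCAreg}: substitute $\Gamma_{A(G_2)}(x)=n_2/(x-r_2)$ into Theorem \ref{SENCATh1}, clear the denominator to obtain the cubic, and track the resulting cancellation of $(x-r_2)$ factors. Your explicit bookkeeping of the pole at $x=r_2$ (leaving multiplicity $m_1-n_1$ rather than $0$, since the exponent on $\phi(A(G_2);x)$ is $m_1$ rather than $n_1$) is exactly the point the paper leaves implicit, and your multiplicity check confirms the count.
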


\begin{cor}\label{SENCAComp}
Let $G$ be an $r$-regular graph on $n$ vertices and $m$ edges with $m\ge n$, and let $p, q \ge 1$ be integers. Then the $A$-spectrum of $G \boxminus  K_{p,q}$ consists of:
\begin{itemize}
  \item[\rm (a)] $0$, repeated $(p+q-1)m-n$ times;
  \item[\rm (b)] $\pm\sqrt{pq}$, repeated $m-n$ times;
  \item[\rm (c)] four roots of the equation \[x^4-\big[pq+(1+p+q)(\lambda_j(G)+r)\big]x^2-2pq(\lambda_j(G)+r)x+pq(\lambda_j(G)+r)=0\]
  for each $j=1, 2, \ldots, n_1$.
\end{itemize}
\end{cor}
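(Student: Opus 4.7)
The plan is to apply Theorem \ref{SENCATh1} with $G_1 = G$ and $G_2 = K_{p,q}$, substitute the known adjacency spectrum and $A$-coronal of $K_{p,q}$, and then track the cancellations between the factor $(\phi(A(K_{p,q});x))^m$ and the denominator of $\Gamma_{A(K_{p,q})}(x)$. Recall that $\phi(A(K_{p,q});x) = x^{p+q-2}(x^2 - pq)$ and $\Gamma_{A(K_{p,q})}(x) = \frac{(p+q)x + 2pq}{x^2-pq}$, both stated earlier in the excerpt. Substituting into Theorem \ref{SENCATh1} gives
\[
\phi(A(G \boxminus K_{p,q});x) = x^{m-n} \cdot x^{(p+q-2)m}(x^2-pq)^{m} \cdot \prod_{i=1}^{n}\Big(x^2 - (1 + x\,\Gamma_{A(K_{p,q})}(x))(\lambda_i(G)+r)\Big).
\]

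The first step is to collapse the $x$-powers: combining $x^{m-n}$ with $x^{(p+q-2)m}$ produces $x^{(p+q-1)m - n}$, which accounts for eigenvalue $0$ in part (a). The second step is a direct calculation of
\[
1 + x\,\Gamma_{A(K_{p,q})}(x) = \frac{(1+p+q)x^2 + 2pq\,x - pq}{x^2 - pq},
\]
which, when inserted into the generic factor, yields
\[
x^2 - (1 + x\,\Gamma_{A(K_{p,q})}(x))(\lambda_i(G)+r) = \frac{P_i(x)}{x^2 - pq},
\]
where $P_i(x) = x^4 - \bigl[pq + (1+p+q)(\lambda_i(G)+r)\bigr]x^2 - 2pq(\lambda_i(G)+r)x + pq(\lambda_i(G)+r)$ is exactly the quartic in part (c) with $j=i$.

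The third step is to collect the $(x^2 - pq)$ factors. The $n$-fold product contributes the denominator $(x^2 - pq)^{n}$, while $(\phi(A(K_{p,q});x))^m$ contributes $(x^2-pq)^{m}$. The net factor is $(x^2-pq)^{m-n}$, giving $\pm\sqrt{pq}$ each with multiplicity $m-n$ as in part (b). Putting everything together yields
\[
\phi(A(G \boxminus K_{p,q});x) = x^{(p+q-1)m - n} (x^2 - pq)^{m-n} \prod_{i=1}^{n} P_i(x),
\]
from which parts (a), (b), (c) of the corollary can be read off.

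The main thing to be careful about is the legitimacy of the cancellation between the $(x^2-pq)^m$ factor coming from $\phi(A(K_{p,q}))^m$ and the $n$ copies of $(x^2-pq)$ in the denominators when $m \ge n$; this is precisely why the hypothesis $m \ge n$ appears, ensuring the exponent $m-n$ is nonnegative and no spurious poles remain. There is nothing genuinely deep in the argument beyond this bookkeeping; the quartic form of $P_i(x)$ is forced by multiplying through by $x^2 - pq$, and the multiplicities in (a) and (b) are matched by comparing the total degree $(p+q-1)m - n + 2(m-n) + 4n = (p+q+1)m + n$ with $|V(G \boxminus K_{p,q})| = n + m + m(p+q)$, which confirms the count.
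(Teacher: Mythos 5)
Your proposal is correct and follows the same route the paper intends: substitute the known $A$-spectrum and $A$-coronal of $K_{p,q}$ into Theorem \ref{SENCATh1}, clear the denominator $x^2-pq$ to obtain the quartic $P_j(x)$, and track the resulting powers of $x$ and $x^2-pq$ (the paper only sketches this, remarking that $\pm\sqrt{pq}$ are the poles of $\Gamma_{A(K_{p,q})}$, while you carry out the bookkeeping explicitly and verify it with a degree count). No gaps.
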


\begin{cor}
\label{SENCAcosp}
\begin{itemize}
  \item[\rm (a)]  If $G_1$ and $G_2$ are $A$-cospectral regular graphs, and $H$ is any graph, then $G_1\boxminus   H$ and $G_2\boxminus   H$ are $A$-cospectral.
  \item[\rm (b)]  If $G$ is a regular graph, and $H_1$ and $H_2$ are $A$-cospectral graphs with $\Gamma_{A(H_1)}(x)=\Gamma_{A(H_2)}(x)$, then $G\boxminus  H_1$ and $G\boxminus   H_2$ are $A$-cospectral.
\end{itemize}
\end{cor}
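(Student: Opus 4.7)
The plan is to derive both parts directly from Theorem \ref{SENCATh1}, which expresses $\phi(A(G_1 \boxminus G_2); x)$ entirely in terms of the data $n_1$, $m_1$, $r_1$, the adjacency eigenvalues $\lambda_i(G_1)$, the polynomial $\phi(A(G_2); x)$, and the coronal $\Gamma_{A(G_2)}(x)$. Once we verify that each of these quantities agrees on the two inputs being compared, the equality of the characteristic polynomials will follow by simply reading off both sides of the formula.

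For part (a), I would start by observing that $A$-cospectral graphs automatically share the same number of vertices $n_1$ (the size of the adjacency matrix) and the same multiset of eigenvalues, hence the same largest eigenvalue. Since a $k$-regular graph has largest adjacency eigenvalue equal to $k$ (realized by the all-ones eigenvector), the regularity $r_1$ is determined by the $A$-spectrum, so $G_1$ and $G_2$ must have the same $r_1$, and consequently the same edge count $m_1 = n_1 r_1/2$. With $H$ playing the role of $G_2$ in Theorem \ref{SENCATh1}, every factor on the right-hand side of the formula — namely $x^{m_1 - n_1}$, $(\phi(A(H);x))^{m_1}$, and $\prod_{i=1}^{n_1}\bigl(x^2 - (1 + x\Gamma_{A(H)}(x))(\lambda_i(G_1) + r_1)\bigr)$ — depends on $G_1$ only through $n_1$, $m_1$, $r_1$, and the multiset $\{\lambda_i(G_1)\}$, all of which coincide with those of $G_2$. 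Therefore $\phi(A(G_1 \boxminus H)) = \phi(A(G_2 \boxminus H))$.

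For part (b), the regular graph $G$ contributes fixed values $n_1$, $m_1$, $r_1$, and $\{\lambda_i(G)\}$ to the formula. The hypothesis that $H_1$ and $H_2$ are $A$-cospectral ensures $\phi(A(H_1); x) = \phi(A(H_2); x)$ (and in particular that they have the same number of vertices, so the exponents in the formula match), while the additional hypothesis $\Gamma_{A(H_1)}(x) = \Gamma_{A(H_2)}(x)$ ensures that the product $\prod_{i=1}^{n_1}\bigl(x^2 - (1 + x\Gamma_{A(H_j)}(x))(\lambda_i(G) + r_1)\bigr)$ is the same for $j = 1$ and $j = 2$. Substitution into Theorem \ref{SENCATh1} then gives $\phi(A(G \boxminus H_1)) = \phi(A(G \boxminus H_2))$.

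There is no genuine obstacle here; the only small point worth flagging is the need to justify that the regularity constant $r_1$ (which appears explicitly in the formula and is not visible as an eigenvalue alone without the regularity hypothesis) is indeed an invariant of the $A$-spectrum of a regular graph, so that in part (a) both graphs contribute the same $r_1$ to the formula. Once that is noted, both statements reduce to a one-line verification from Theorem \ref{SENCATh1}.
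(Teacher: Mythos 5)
Your proof is correct and follows exactly the route the paper intends: the corollary is read off directly from the formula in Theorem \ref{SENCATh1}, after noting that $A$-cospectral regular graphs share the same degree (the largest adjacency eigenvalue) and hence the same edge count, and that in part (b) the dependence on the second graph is only through $\phi(A(H);x)$ and $\Gamma_{A(H)}(x)$. The paper states this corollary without proof as an immediate consequence of the theorem, and your write-up supplies precisely the routine verification it omits.
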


\subsection{$L$-spectra of subdivision-edge neighbourhood coronae}
\label{sec:SENCL}

\begin{thm}\label{SENCLTh1}
Let $G_1$ be an $r_1$-regular graph with $n_1$ vertices and $m_1$ edges, and $G_2$ an arbitrary graph on $n_2$ vertices. Then
\begin{eqnarray*}
\phi\left(L(G_1\boxminus  G_2);x\right)&=& x(x-2-r_1-r_1n_2)(x-2)^{m_1-n_1}\cdot\prod_{i=1}^{n_2}(x-2-\mu_i(G_2))^{m_1}\\
&&\cdot\prod_{i=2}^{n_1}\left(x^2-(2+r_1+r_1n_2)x+(1+n_2)\mu_i(G_1)\right).
\end{eqnarray*}
\end{thm}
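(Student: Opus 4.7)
The plan is to parallel the proof of Theorem \ref{SVNCLthm1}, adapting it to the different incidence pattern of the subdivision-edge neighbourhood corona. With respect to the partition (\ref{SENCpart}), I would first write $L(G_1\boxminus G_2)$ in $3\times 3$ block form. The essential structural change from $L(G_1\boxdot G_2)$ is that the $m_1$ copies of $G_2$ are now attached to the neighbours of inserted vertices; since each inserted vertex has exactly two neighbours, both lying in $V(G_1)$, the coupling block between $V(G_1)$ and the $G_2$-copies becomes $-R\otimes\mathbf{1}_{n_2}^T$, while the coupling between $I(G_1)$ and the $G_2$-copies is zero. The diagonal degree blocks are $r_1(1+n_2)I_{n_1}$, $2I_{m_1}$ and $I_{m_1}\otimes(2I_{n_2}+L(G_2))$, in agreement with the degree data listed above.

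Next I would form $xI-L(G_1\boxminus G_2)$ and apply (\ref{schur1}) to take the Schur complement with respect to the bottom-right block $I_{m_1}\otimes((x-2)I_{n_2}-L(G_2))$, whose determinant factors as $\prod_{i=1}^{n_2}(x-2-\mu_i(G_2))^{m_1}$. Using the Kronecker identity $(R\otimes\mathbf{1}_{n_2}^T)(I_{m_1}\otimes((x-2)I_{n_2}-L(G_2))^{-1})(R^T\otimes\mathbf{1}_{n_2})=\Gamma_{L(G_2)}(x-2)\cdot RR^T$ together with $\Gamma_{L(G_2)}(x-2)=n_2/(x-2)$ from (\ref{eq:GammaTL}), the reduction yields the Schur complement
\[
S=\begin{pmatrix}(x-r_1(1+n_2))I_{n_1}-\frac{n_2}{x-2}RR^T & R\\ R^T & (x-2)I_{m_1}\end{pmatrix}.
\]
The crucial difference from the vertex-case proof is that $RR^T$ now appears in place of $R^TR$, so the line graph of $G_1$ plays no role; instead one uses the identity $RR^T=A(G_1)+r_1I_{n_1}$, whose eigenvalues are $\lambda_i(G_1)+r_1=2r_1-\mu_i(G_1)$.

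A second Schur reduction via (\ref{schur2}), eliminating $(x-2)I_{m_1}$, produces
\[
\det S=(x-2)^{m_1}\det\!\left((x-r_1(1+n_2))I_{n_1}-\frac{1+n_2}{x-2}RR^T\right).
\]
Diagonalising $RR^T$ and clearing the $(x-2)$ denominator, each eigenvalue contributes the quadratic $x^2-(2+r_1+r_1n_2)x+(1+n_2)\mu_i(G_1)$, and the overall power of $(x-2)$ reduces to $(x-2)^{m_1-n_1}$. The $i=1$ factor, for which $\mu_1(G_1)=0$, factorises as $x(x-2-r_1-r_1n_2)$, accounting for the isolated leading factor in the statement; combining with the determinant of the bottom-right block yields the claimed formula.

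The main obstacle is bookkeeping: the roles of $R$ and $R^T$ (and of $\mathbf{1}_{n_2}$ versus $\mathbf{1}_{n_2}^T$) swap compared with Theorem \ref{SVNCLthm1}, and the degree shift $r_1(1+n_2)$ in the $(1,1)$-block, together with the vanishing $(2,3)$-block, must be carried through the two Schur complements cleanly. Once the block form is written down correctly, the rest is a routine application of (\ref{schur1}), (\ref{schur2}), and (\ref{eq:GammaTL}).
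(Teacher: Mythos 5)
Your proposal matches the paper's own proof essentially step for step: the same block form of $L(G_1\boxminus G_2)$, the same first Schur complement against $I_{m_1}\otimes((x-2)I_{n_2}-L(G_2))$ producing $\Gamma_{L(G_2)}(x-2)RR^T$, the same second reduction eliminating $(x-2)I_{m_1}$, and the same use of $RR^T=A(G_1)+r_1I_{n_1}$ with eigenvalues $2r_1-\mu_i(G_1)$. The only blemish is that the second elimination (Schur complement of the bottom-right block) is an application of (\ref{schur1}) rather than (\ref{schur2}) as you cite, but the resulting formula you write is correct.
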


\begin{proof}
Let $R$ be the vertex-edge incidence matrix of $G_1$. Then, with respect to the partition (\ref{SENCpart}), the Laplacian matrix of $G_1\boxminus  G_2$ can be written as
\[L(G_1\boxminus  G_2)=\bmat{
                          r_1(1+n_2)I_{n_1} & -R & -R\otimes\textbf{1}_{n_2}^T \\[0.2cm]
                          -R^T & 2I_{m_1} & 0_{m_1\times m_1}\otimes\textbf{1}_{n_2}^T\\[0.2cm]
                          -\left(R\otimes\textbf{1}_{n_2}^T\right)^T & \left(0_{m_1\times m_1}\otimes\textbf{1}_{n_2}^T\right)^T &I_{m_1}\otimes (2I_{n_2}+L(G_2))
                       }.\]
Thus the Laplacian characteristic polynomial of $G_1\boxminus  G_2$ is given by
\begin{eqnarray*}
\phi\left(L(G_1\boxminus  G_2)\right)
&=& \det\bmat{
                          (x-r_1-r_1n_2)I_{n_1} &  R &   R\otimes\textbf{1}_{n_2}^T \\[0.2cm]
                           R^T & (x-2)I_{m_1} & 0_{m_1\times m_1}\otimes\textbf{1}_{n_2}^T\\[0.2cm]
                            R^T\otimes\textbf{1}_{n_2}  &  0_{m_1\times m_1}\otimes\textbf{1}_{n_2} &I_{m_1}\otimes ((x-2)I_{n_2}-L(G_2))
                       }\\ [0.2cm]
&=& \det(I_{m_1}\otimes ((x-2)I_{n_2}-L(G_2)))\cdot\det(S)\\
&=&\det(S)\cdot\prod_{i=1}^{n_2}(x-2-\mu_i(G_2))^{m_1},
\end{eqnarray*}
where
\begin{eqnarray*}
S&=&\pmat{
         (x-r_1-r_1n_2)I_{n_1}-\Gamma_{L(G_2)}(x-2)RR^T  & R \\[0.2cm]
         R^T & (x-2)I_{m_1}
       }
\end{eqnarray*}
is the Schur complement of $I_{m_1}\otimes ((x-2)I_{n_2}-L(G_2))$ obtained by applying (\ref{schur1}). Thus, by (\ref{schur1}) again, and (\ref{eq:GammaTL}), the result follows from
\begin{eqnarray*}
\det (S)&=&(x-2)^{m_1}\cdot\det\left((x-r_1-r_1n_2)I_{n_1}-\Gamma_{L(G_2)}(x-2)RR^T-\frac{1}{x-2}RR^T\right) \\
&=& (x-2)^{m_1}\cdot\prod_{i=1}^{n_1}\left(x-r_1-r_1n_2-\frac{n_2+1}{x-2}\cdot(2r_1-\mu_i(G_1))\right)\\
&=& (x-2)^{m_1-n_1}\cdot\prod_{i=1}^{n_1}\left(x^2-(2+r_1+r_1n_2)x+(1+n_2)\mu_i(G_1)\right).
\end{eqnarray*}\qed
\end{proof}

Theorem \ref{SENCLTh1} implies the following results.
\begin{cor}
\label{SENCLcosp}
\begin{itemize}
  \item[\rm (a)]  If $G_1$ and $G_2$ are $L$-cospectral regular graphs, and $H$ is an arbitrary graph, then $G_1\boxminus  H$ and $G_2\boxminus  H$ are $L$-cospectral.
  \item[\rm (b)]  If $G$ is a regular graph, and $H_1$ and $H_2$ are $L$-cospectral graphs, then $G\boxminus  H_1$ and $G\boxminus  H_2$ are $L$-cospectral.
  \item[\rm (c)]  If $G_1$ and $G_2$ are $L$-cospectral regular graphs, and $H_1$ and $H_2$ are $L$-cospectral graphs, then $G_1\boxminus  H_1$ and $G_2 \boxminus H_2$ are $L$-cospectral.
\end{itemize}
\end{cor}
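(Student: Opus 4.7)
The plan is to derive all three statements directly from Theorem~\ref{SENCLTh1}, which expresses $\phi(L(G_1\boxminus G_2);x)$ as an explicit polynomial in $x$ whose coefficients depend on $G_1$ only through the data $n_1$, $m_1$, $r_1$, and the Laplacian eigenvalues $\mu_i(G_1)$, and on $G_2$ only through $n_2$ and the Laplacian eigenvalues $\mu_i(G_2)$. The strategy is therefore just to check, in each of (a), (b), (c), that the two graphs $G_1\boxminus G_2$ under comparison agree on all of the inputs to that formula; equality of the characteristic polynomials, and hence $L$-cospectrality, follows immediately.

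For part~(a), suppose $G_1$ and $G_2$ are $L$-cospectral regular graphs and let $H$ be arbitrary. Since they share the same Laplacian spectrum, they have the same number of vertices $n_1$ (the length of the spectrum) and the same multiset of Laplacian eigenvalues $\mu_i$. Regularity is the key point: from $\mathrm{tr}(L)=\sum_i \mu_i = \sum_{v} d(v)$, both graphs have the same sum of degrees, and since each is regular this forces a common degree $r_1$ and hence a common number of edges $m_1=n_1 r_1/2$. Plugging these identical values of $n_1$, $m_1$, $r_1$, and of the $\mu_i(G_1)$'s into the formula of Theorem~\ref{SENCLTh1} (with $H$ playing the role of $G_2$) gives identical Laplacian characteristic polynomials for $G_1\boxminus H$ and $G_2\boxminus H$.

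For part~(b), let $G$ be regular and let $H_1, H_2$ be $L$-cospectral. Then $H_1$ and $H_2$ share $n_2$ and the multiset $\{\mu_i(H_\cdot)\}$, which are precisely the only features of $G_2$ that enter the formula of Theorem~\ref{SENCLTh1}; the factors $(x-2)^{m_1-n_1}$, $x(x-2-r_1-r_1 n_2)$ and $\prod_{i=2}^{n_1}(x^2-(2+r_1+r_1 n_2)x+(1+n_2)\mu_i(G_1))$ depend only on $G$ and on $n_2$, so they are unchanged, while $\prod_{i=1}^{n_2}(x-2-\mu_i(H_\cdot))^{m_1}$ is also unchanged since the $\mu_i$'s coincide. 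Part~(c) is the composition of (a) and (b): apply (a) with $H=H_1$ to conclude $G_1\boxminus H_1$ and $G_2\boxminus H_1$ are $L$-cospectral, then apply (b) with $G=G_2$ to conclude $G_2\boxminus H_1$ and $G_2\boxminus H_2$ are $L$-cospectral, and chain.

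There is essentially no obstacle; the only non-bookkeeping step is the observation in (a) that $L$-cospectrality of two \emph{regular} graphs forces their common degree (and thus their edge counts), which is needed because $r_1$ and $m_1$ appear as explicit parameters in the factorization of Theorem~\ref{SENCLTh1}. Without regularity this would fail, which is why the hypothesis of regularity on $G_1,G_2$ in (a) and (c) cannot be dropped.
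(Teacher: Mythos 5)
Your proof is correct and follows the same route as the paper, which simply asserts that Corollary~\ref{SENCLcosp} follows from Theorem~\ref{SENCLTh1}; you have supplied the routine verification (in particular the observation that $L$-cospectral regular graphs share $n_1$, $m_1$, and hence $r_1=2m_1/n_1$, so every parameter in the factorization coincides). Nothing further is needed.
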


\begin{cor}\label{SENCSptree}
Let $G_1$ be an $r_1$-regular graph on $n_1$ vertices and $m_1$ edges, and $G_2$ an arbitrary graph on $n_2$ vertices. Then
\begin{eqnarray*}
t(G_1 \boxminus   G_2)=\frac{2^{m_1-n_1}\cdot(2+r_1+r_1n_2)\cdot\prod_{i=1}^{n_2}(2+\mu_i(G_2))^{m_1}\cdot\prod_{i=2}^{n_1}(1+n_2)\mu_i(G_1)}{n_1+m_1+m_1n_2}.
\end{eqnarray*}
\end{cor}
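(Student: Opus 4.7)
The plan is to combine Theorem \ref{SENCLTh1} with the classical Kirchhoff-type identity
$$t(G)=\frac{1}{n}\prod_{i=2}^{n}\mu_i(G),$$
which expresses the number of spanning trees of a connected graph $G$ on $n$ vertices as the product of its non-zero Laplacian eigenvalues divided by $n$. The order $n$ of $G_1\boxminus G_2$ is $n_1+m_1+m_1n_2$, which will supply the denominator; the rest is extracting the multiset of Laplacian eigenvalues from the factored form of $\phi(L(G_1\boxminus G_2);x)$ and multiplying them together.

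Concretely, I would first read off the eigenvalues directly from Theorem \ref{SENCLTh1}: (i) $0$ with multiplicity one from the explicit factor $x$; (ii) $2+r_1+r_1n_2$ with multiplicity one; (iii) $2$ with multiplicity $m_1-n_1$; (iv) for each $i=1,\ldots,n_2$, the value $2+\mu_i(G_2)$ with multiplicity $m_1$; and (v) for each $i=2,\ldots,n_1$, the two roots of
$$x^2-(2+r_1+r_1n_2)x+(1+n_2)\mu_i(G_1)=0,$$
whose product, by Vieta's formula, is $(1+n_2)\mu_i(G_1)$. Assuming $G_1$ is connected so that $\mu_i(G_1)>0$ for $i\ge 2$, these are the only non-zero eigenvalues besides those in (ii)–(iv), each of which is evidently positive.

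Multiplying these contributions gives
$$\prod_{i=2}^{n}\mu_i(G_1\boxminus G_2)=(2+r_1+r_1n_2)\cdot 2^{m_1-n_1}\cdot\prod_{i=1}^{n_2}(2+\mu_i(G_2))^{m_1}\cdot\prod_{i=2}^{n_1}(1+n_2)\mu_i(G_1),$$
and dividing by $n_1+m_1+m_1n_2$ yields the stated expression. There is essentially no obstacle; the only check worth a second's thought is that the zero at $x=0$ recorded in (i) is simple, i.e.\ none of the other factors vanish at $x=0$. This is immediate: the factor $x-2-r_1-r_1n_2$, the factor $(x-2)^{m_1-n_1}$, and the factors $x-2-\mu_i(G_2)$ are non-zero at $0$ because $\mu_i(G_2)\ge 0$, while each quadratic in (v) evaluated at $x=0$ equals $(1+n_2)\mu_i(G_1)>0$ for $i\ge 2$ by the connectivity of $G_1$. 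Hence the application of the spanning-tree formula is justified and the corollary follows.
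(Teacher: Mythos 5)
Your proof is correct and is exactly the argument the paper intends (the paper gives no details, simply citing Theorem \ref{SENCLTh1} together with the formula $t(G)=\mu_2(G)\cdots\mu_n(G)/n$): read the nonzero Laplacian eigenvalues off the factored characteristic polynomial, use Vieta for the quadratic factors, and divide by the order $n_1+m_1+m_1n_2$. Your observation that $G_1$ must implicitly be connected (so that the zero eigenvalue is simple and the spanning-tree formula applies) is a point the paper glosses over, and is worth making explicit.
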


\subsection{$Q$-spectra of subdivision-edge neighbourhood coronae}
\label{sec:SENCQ}

\begin{thm}\label{SENCQthm1}
Let $G_1$ be an $r_1$-regular graph on $n_1$ vertices and $m_1$ edges, and $G_2$ an arbitrary graph on $n_2$ vertices. Then
\begin{eqnarray*}
\phi\left(Q(G_1 \boxminus   G_2);x\right) &=& (x-2)^{m_1-n_1}\cdot \prod_{i=1}^{n_2}(x-2-\nu_i(G_2))^{m_1}\\
                                          && \cdot\prod_{i=1}^{n_1}\Big((x-r_1-r_1n_2)(x-2)-\big(1+(x-2)\Gamma_{Q(G_2)}(x-2)\big)\nu_i(G_1)\Big).
\end{eqnarray*}
\end{thm}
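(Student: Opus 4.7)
The plan is to follow essentially the same two-step Schur complement strategy used in the proof of Theorem \ref{SENCLTh1}, adapted for the signless Laplacian. First, with respect to the partition (\ref{SENCpart}) and using the degree formulas for $G_1 \boxminus G_2$ (noting that $G_1$ is $r_1$-regular, so the vertices in $V(G_1)$ all have degree $r_1(1+n_2)$), I would write
\[
Q(G_1\boxminus G_2)=\bmat{
r_1(1+n_2)I_{n_1} & R & R\otimes\textbf{1}_{n_2}^T \\[0.15cm]
R^T & 2I_{m_1} & 0_{m_1\times m_1}\otimes\textbf{1}_{n_2}^T\\[0.15cm]
\left(R\otimes\textbf{1}_{n_2}^T\right)^T & \left(0_{m_1\times m_1}\otimes\textbf{1}_{n_2}^T\right)^T & I_{m_1}\otimes(2I_{n_2}+Q(G_2))
},
\]
where $R$ is the vertex-edge incidence matrix of $G_1$. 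The $(3,3)$-block is $D+A$ on each copy of $G_2$ plus $2I_{n_2}$, because each $u_j^i$ picks up exactly two extra neighbours from $I(G_1)$.

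Next I would form $xI-Q(G_1\boxminus G_2)$ and apply the Schur complement formula (\ref{schur1}) with $M_4 = I_{m_1}\otimes((x-2)I_{n_2}-Q(G_2))$. The determinant of $M_4$ produces $\prod_{i=1}^{n_2}(x-2-\nu_i(G_2))^{m_1}$. The Schur complement itself, using the Kronecker product identities $(A\otimes B)(C\otimes D)=AC\otimes BD$ and the definition $\Gamma_{Q(G_2)}(x-2)=\textbf{1}_{n_2}^T((x-2)I_{n_2}-Q(G_2))^{-1}\textbf{1}_{n_2}$, works out to
\[
S=\pmat{(x-r_1-r_1n_2)I_{n_1}-\Gamma_{Q(G_2)}(x-2)\,RR^T & -R \\[0.15cm] -R^T & (x-2)I_{m_1}}.
\]

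Then I would apply Schur complement one more time to $S$, this time with $M_4=(x-2)I_{m_1}$ (formula (\ref{schur1})), giving
\[
\det(S)=(x-2)^{m_1}\cdot\det\!\left((x-r_1-r_1n_2)I_{n_1}-\Bigl(\Gamma_{Q(G_2)}(x-2)+\tfrac{1}{x-2}\Bigr)RR^T\right).
\]
Using the standard identity $RR^T=A(G_1)+r_1I_{n_1}$ and the fact that $G_1$ is $r_1$-regular so that $\nu_i(G_1)=\lambda_i(G_1)+r_1$, the inner determinant factors as $\prod_{i=1}^{n_1}\bigl(x-r_1-r_1n_2-\bigl(\Gamma_{Q(G_2)}(x-2)+\tfrac{1}{x-2}\bigr)\nu_i(G_1)\bigr)$. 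Multiplying through by $(x-2)^{n_1}$ to clear denominators absorbs $n_1$ copies of $(x-2)^{m_1}$ and produces exactly the stated product $\prod_{i=1}^{n_1}\bigl((x-r_1-r_1n_2)(x-2)-(1+(x-2)\Gamma_{Q(G_2)}(x-2))\nu_i(G_1)\bigr)$ together with the leftover $(x-2)^{m_1-n_1}$.

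The steps are entirely parallel to Theorems \ref{SVNCQthm1} and \ref{SENCLTh1}; the only genuinely delicate point is the bookkeeping of the Kronecker products when simplifying $M_2M_4^{-1}M_3$, in particular ensuring the $\textbf{1}_{n_2}^T$ vectors contract to produce precisely $\Gamma_{Q(G_2)}(x-2)\cdot RR^T$ in the $(1,1)$ block of $S$ and zeros elsewhere (this is where the structural difference between $\boxdot$ and $\boxminus$ shows up, since in $\boxminus$ the copies of $G_2$ attach to inserted vertices' neighbours, i.e.\ to $V(G_1)$ via $R$ rather than via $R^T$). After that, the remainder is routine manipulation.
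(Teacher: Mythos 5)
Your proposal is correct and follows essentially the same route as the paper, which simply writes down the same block matrix for $Q(G_1\boxminus G_2)$ and appeals to $RR^T=Q(G_1)$ together with the argument of Theorem \ref{SENCLTh1}; your two successive Schur complements and the final clearing of denominators by $(x-2)^{n_1}$ are exactly the details the paper leaves implicit. All steps check out, including the key observation that the copies of $G_2$ attach to $V(G_1)$ via $R\otimes\textbf{1}_{n_2}^T$, so the coronal term $\Gamma_{Q(G_2)}(x-2)\,RR^T$ lands in the $(1,1)$ block.
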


\begin{proof}
Let $R$ be the vertex-edge incidence matrix of $G_1$. Then the signless Laplacian matrix of $G_1\boxminus G_2$ can be written as
\[Q(G_1\boxminus  G_2)=\bmat{
                          r_1(1+n_2)I_{n_1} &  R &  R\otimes\textbf{1}_{n_2}^T \\[0.2cm]
                           R^T & 2I_{m_1} & 0_{m_1\times m_1}\otimes\textbf{1}_{n_2}^T\\[0.2cm]
                           \left(R\otimes\textbf{1}_{n_2}^T\right)^T & \left(0_{m_1\times m_1}\otimes\textbf{1}_{n_2}^T\right)^T &I_{m_1}\otimes (2I_{n_2}+Q(G_2))
                       }.\]
The result follows by applying $RR^T=Q(G_1)$ and refining the arguments used to prove Theorem \ref{SENCLTh1}.
\qed
\end{proof}

By applying (\ref{eq:GammaT}), Theorem \ref{SENCQthm1} implies the following result.
\begin{cor}\label{SENCQreg}
Let $G_1$ be an $r_1$-regular graph on $n_1$ vertices and $m_1$ edges, and $G_2$ an $r_2$-regular graph on $n_2$ vertices. Then
\begin{eqnarray*}
\phi\left(Q(G_1\boxminus  G_2);x\right)=(x-2)^{m_1-n_1}\cdot(x-2-2r_2)^{m_1-n_1}\cdot \prod_{i=1}^{n_2-1}(x-2-\nu_i(G_2))^{m_1}\cdot\prod_{i=1}^{n_1}(a-b\nu_i(G_1)),
\end{eqnarray*}
where $a=(x-r_1-r_1n_2)(x-2)(x-2-2r_2)$ and $b=(1+n_2)x-2-2r_2-2n_2$.
\end{cor}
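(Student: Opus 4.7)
The plan is to obtain Corollary \ref{SENCQreg} as a direct specialisation of Theorem \ref{SENCQthm1}, using the regularity of $G_2$ to evaluate the $Q$-coronal in closed form and then to identify an exact pole/zero cancellation in the resulting expression.

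First, since $G_2$ is $r_2$-regular, every row sum of $Q(G_2) = D(G_2) + A(G_2)$ equals $2r_2$, so formula (\ref{eq:GammaT}) yields
\[
\Gamma_{Q(G_2)}(x) = \frac{n_2}{x - 2r_2}, \qquad \text{hence} \qquad \Gamma_{Q(G_2)}(x-2) = \frac{n_2}{x - 2 - 2r_2}.
\]
Substituting this into the factor $\bigl(1 + (x-2)\Gamma_{Q(G_2)}(x-2)\bigr)\nu_i(G_1)$ appearing in Theorem \ref{SENCQthm1}, I would put the expression over the common denominator $x - 2 - 2r_2$; the numerator of $1 + (x-2)\cdot n_2/(x-2-2r_2)$ simplifies to $(1+n_2)x - 2 - 2r_2 - 2n_2 = b$, and multiplying the other term by $(x-2-2r_2)/(x-2-2r_2)$ gives
\[
(x-r_1-r_1n_2)(x-2) - \bigl(1 + (x-2)\Gamma_{Q(G_2)}(x-2)\bigr)\nu_i(G_1) = \frac{a - b\,\nu_i(G_1)}{x - 2 - 2r_2},
\]
with $a = (x-r_1-r_1n_2)(x-2)(x-2-2r_2)$ as in the statement.

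Next, I would use the well-known fact that for an $r_2$-regular graph the signless Laplacian eigenvalues are $r_2 + \lambda_i(G_2)$, and in particular $\nu_{n_2}(G_2) = 2r_2$ corresponding to the all-ones eigenvector. Thus the factor $\prod_{i=1}^{n_2}(x-2-\nu_i(G_2))^{m_1}$ in Theorem \ref{SENCQthm1} splits off exactly $(x-2-2r_2)^{m_1}$, leaving $\prod_{i=1}^{n_2-1}(x-2-\nu_i(G_2))^{m_1}$. Multiplying out the $n_1$ factors of $(a - b\nu_i(G_1))/(x-2-2r_2)$ contributes a denominator $(x-2-2r_2)^{n_1}$, which cancels with part of the $(x-2-2r_2)^{m_1}$ factor to leave $(x-2-2r_2)^{m_1 - n_1}$. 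Assembling everything, Theorem \ref{SENCQthm1} becomes
\[
(x-2)^{m_1-n_1}\cdot(x-2-2r_2)^{m_1-n_1}\cdot \prod_{i=1}^{n_2-1}(x-2-\nu_i(G_2))^{m_1}\cdot\prod_{i=1}^{n_1}(a - b\,\nu_i(G_1)),
\]
which is exactly the claimed formula.

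The derivation is essentially algebraic bookkeeping once Theorem \ref{SENCQthm1} is in hand; the only non-routine point is tracking the pole of $\Gamma_{Q(G_2)}(x-2)$ at $x = 2 + 2r_2$ together with the zero of the eigenvalue factor at the same point, and verifying that the multiplicities balance to yield the exponent $m_1 - n_1$ (as opposed to $m_1$ or $m_1 - n_1 - 1$). That exponent count is the one place I would double-check carefully before declaring the identity proved.
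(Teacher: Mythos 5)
Your proposal is correct and takes essentially the same route the paper intends: the paper obtains this corollary from Theorem \ref{SENCQthm1} simply ``by applying (\ref{eq:GammaT})'', which is precisely your substitution $\Gamma_{Q(G_2)}(x-2)=n_2/(x-2-2r_2)$, the splitting-off of the factor $(x-2-2r_2)^{m_1}$ corresponding to $\nu_{n_2}(G_2)=2r_2$, and the cancellation against the denominator $(x-2-2r_2)^{n_1}$ coming from the $n_1$ eigenvalue factors. Your exponent count $m_1-n_1$ is right.
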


Finally, Theorem \ref{SENCQthm1} enables us to construct infinitely many pairs of $Q$-cospectral graphs.

\begin{cor}
\label{SENCQcosp}
\begin{itemize}
  \item[\rm (a)]  If $G_1$ and $G_2$ are $Q$-cospectral regular graphs, and $H$ is any graph, then $G_1\boxminus  H$ and $G_2\boxminus  H$ are $Q$-cospectral.
  \item[\rm (b)]  If $G$ is a regular graph, and $H_1$ and $H_2$ are $Q$-cospectral graphs with $\Gamma_{Q(H_1)}(x)=\Gamma_{Q(H_2)}(x)$, then $G\boxminus  H_1$ and $G\boxminus  H_2$ are $Q$-cospectral.
\end{itemize}
\end{cor}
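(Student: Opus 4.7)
The plan is to derive both parts as essentially immediate consequences of the factorization supplied by Theorem \ref{SENCQthm1}. That theorem expresses
$$\phi(Q(G_1 \boxminus G_2);x) = (x-2)^{m_1-n_1}\prod_{i=1}^{n_2}(x-2-\nu_i(G_2))^{m_1}\prod_{i=1}^{n_1}\Big((x-r_1-r_1n_2)(x-2)-\big(1+(x-2)\Gamma_{Q(G_2)}(x-2)\big)\nu_i(G_1)\Big),$$
so the $Q$-characteristic polynomial of $G_1\boxminus G_2$ is completely determined by the data $r_1$, $n_1$, $m_1$, the multiset $\{\nu_i(G_1)\}$, the order $n_2$, the multiset $\{\nu_i(G_2)\}$, and the coronal $\Gamma_{Q(G_2)}(x)$. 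Both assertions reduce to verifying that, under the respective hypotheses, all of these pieces of data are common to the two graphs being compared.

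For part (a), I would begin by observing that if $G_1$ and $G_2$ are $Q$-cospectral then $n_1$ is the same for both (cospectrality preserves matrix size). Assuming in addition that both are regular, I use the fact that an $r$-regular graph has $Q=rI+A$, so its largest $Q$-eigenvalue equals $2r$; thus the common $Q$-spectrum forces a common regularity $r_1$, and hence a common number of edges $m_1 = n_1r_1/2$. Substituting $H$ for $G_2$ in the formula and either of $G_1, G_2$ for the outer graph, every one of the three factors is written in terms of quantities that have just been shown to agree, giving $\phi(Q(G_1\boxminus H);x)=\phi(Q(G_2\boxminus H);x)$.

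For part (b), the outer graph $G$ is fixed and regular, so $r_1, n_1, m_1$ and $\{\nu_i(G)\}$ are automatically common. The hypothesis that $H_1$ and $H_2$ are $Q$-cospectral yields $n_2$ and $\{\nu_i(H_j)\}$ identical for $j=1,2$, which makes the first two products of the displayed formula coincide. The additional hypothesis $\Gamma_{Q(H_1)}(x)=\Gamma_{Q(H_2)}(x)$ is precisely what is needed to make the inner third factor match as well, since that factor depends on $H$ only through $\{\nu_i(G)\}$-free data plus $\Gamma_{Q(H)}(x-2)$ and $n_2$. Consequently $\phi(Q(G\boxminus H_1);x)=\phi(Q(G\boxminus H_2);x)$.

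There is no serious obstacle; the only step needing a moment of care is the inference in part (a) that regularity together with $Q$-cospectrality forces the common regularity and hence the common number of edges, without which neither the exponent $m_1-n_1$ in the first factor nor the multiplicities $m_1$ in the second would be justifiably the same.
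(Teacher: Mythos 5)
Your proposal is correct and follows the same route as the paper, which presents this corollary as a direct consequence of Theorem \ref{SENCQthm1}: one simply checks that every quantity appearing in the factorization ($r_1$, $n_1$, $m_1$, the $Q$-spectra, $n_2$, and the coronal $\Gamma_{Q(\cdot)}$) is shared by the two graphs under comparison. Your extra observation in part (a) --- that regularity plus $Q$-cospectrality forces a common degree $r_1$ (largest $Q$-eigenvalue $2r_1$) and hence a common edge count $m_1=n_1r_1/2$ --- is a worthwhile point that the paper leaves implicit.
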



\begin{thebibliography}{99}
\setlength{\parskip}{0pt} \addtolength{\itemsep}{-4pt}
\footnotesize{

\bibitem{kn:Barik07}
S. Barik, S. Pati, B. K. Sarma, The spectrum of the corona of two graphs, SIAM J. Discrete Math. 24 (2007) 47--56.

\bibitem{kn:Brouwer12}
A. E. Brouwer, W. H. Haemers, Spectra of Graphs, Springer, 2012.

\bibitem{kn:Cui12}
S.-Y. Cui, G.-X. Tian, The spectrum and the signless Laplacian spectrum of coronae, Linear Algebra Appl. 437 (2012) 1692--1703.

\bibitem{kn:Cvetkovic95}
D. M. Cvetkovi\'{c}, M. Doob, H. Sachs, Spectra of Graphs - Theory and Applications, Third edition, Johann Ambrosius Barth. Heidelberg, 1995.

\bibitem{kn:Cvetkovic10}
D. M. Cvetkovi\'{c}, P. Rowlinson, H. Simi\'{c}, An Introduction to the Theory of Graph Spectra, Cambridge University Press, Cambridge, 2010.

\bibitem{kn:Fiedler73} M. Fiedler, Algebraic connectivity of Graphs, Czechoslovak Mathematical Journal 23 (98) 1973.

\bibitem{kn:Godsil01}
C. Godsil, G. Royle, Algebraic Graph Theory, Springer, New York, 2001.

\bibitem{kn:Gopalapillai11}
I. Gopalapillai, The spectrum of neighborhood corona of graphs, Kragujevac Journal of Mathematics 35 (2011) 493--500.


\bibitem{HLW}
S. Hoory, N. Linial,  A. Wigderson, Expander graphs and their applications,  Bull. Amer. Math. Soc. 43(4)  (2006) 439--561.

\bibitem{kn:Hou10}
Y.-P. Hou, W.-C. Shiu, The spectrum of the edge corona of two graphs, Electron. J. Linear Algebra. 20 (2010) 586--594.


\bibitem{kn:Indulal12}
G. Indulal, Spectrum of two new joins of graphs and infinite families of integral graphs, Kragujevac J. Math. 36 (2012) 133--139.


\bibitem{kn:Kronecker}
R. A. Horn, C. R. Johnson, Topics in Matrix Analysis, Cambridge University Press, 1991.


\bibitem{kn:Liu12} X.-G. Liu, Z.-H. Zhang, Spectra of subdivision-vertex and subdivision-edge joins of graphs, submitted.

\bibitem{kn:Lu12} P.-L. Lu, Y.-F. Miao, Spectra of subdivision-vertex and subdivision-edge coronae, manuscript.


\bibitem{Moh}
B. Mohar, Isoperimetric number of graphs, J. Combin. Theory (B) 47 (1989) 274--291.

\bibitem{kn:McLeman11}
C. McLeman, E. McNicholas, Spectra of coronae, Linear Algebra Appl. 435 (2011) 998--1007.

\bibitem{kn:Wang12}
S.-L. Wang, B. Zhou, The signless Laplacian spectra of the corona and edge corona of two graphs, Linear Multilinear Algebra (2012) 1--8, iFirst.

\bibitem{kn:Schur}
F.-Z. Zhang, The Schur Complement and Its Applications, Springer, 2005.
}
\end{thebibliography}
\end{document}